\documentclass{IEEEtran}
\usepackage[T1]{fontenc}
\usepackage[utf8]{inputenc}
\usepackage{array}
\usepackage{mathtools}
\usepackage{multirow}
\usepackage{amsmath}
\usepackage{amsthm}
\usepackage{amssymb}
\usepackage{graphicx}

\makeatletter

\providecommand{\tabularnewline}{\\}

\theoremstyle{plain}
\newtheorem{thm}{\protect\theoremname}
\theoremstyle{plain}
\newtheorem{lem}[thm]{\protect\lemmaname}
\theoremstyle{plain}
\newtheorem{prop}[thm]{\protect\propositionname}

\usepackage{fontawesome}
\usepackage{color}
\usepackage{cite}
\usepackage{amsthm}
\theoremstyle{plain}
\newtheorem{asm}{Assumption}
\newtheorem*{asm*}{Assumption}
\usepackage{xcolor}
\usepackage{multirow}
\usepackage[normalem]{ulem}

\usepackage{hyperref}

\ifdefined\showcaptionsetup
 \PassOptionsToPackage{caption=false}{subfig}
\fi
\usepackage{subfig}
\makeatother

\providecommand{\lemmaname}{Lemma}
\providecommand{\propositionname}{Proposition}
\providecommand{\theoremname}{Theorem}

\begin{document}
\title{Spectral Initialization and Certification for Power System Angle Estimation}
\author{Iven Guzel, \emph{Student Member, IEEE}, Andrew~D.~McRae, and Richard
Y. Zhang,\emph{ Member, IEEE}\thanks{I. Guzel and R. Y. Zhang are with the Dept. of Electrical and Computer
Engineering, University of Illinois Urbana-Champaign, Urbana, IL 61801
USA (e-mail: iguzel2@illinois.edu; ryz@illinois.edu).}\thanks{A. D. McRae is with CERMICS, CNRS, ENPC, Institut Polytechnique de
Paris, Marne-la-Vallée, France. (e-mail: andrew.mcrae@enpc.fr).}}

\maketitle

\global\long\def\H{\mathbb{H}}%
\global\long\def\S{\mathbb{S}}%
\global\long\def\R{\mathbb{R}}%
\global\long\def\C{\mathbb{C}}%

\global\long\def\inner#1#2{\left\langle #1,#2\right\rangle }%
\global\long\def\bigp#1{\left(#1\right)}%
\global\long\def\bigsp#1{\left[#1\right]}%
\global\long\def\bigcp#1{\left\{  #1\right\}  }%
\global\long\def\Cone{\C_{1}^{n}}%
\global\long\def\norm#1{\lVert#1\rVert}%
\global\long\def\opnorm#1{\lVert#1\rVert_{\mathrm{op}}}%

\global\long\def\rmse{\operatorname{RMSE}}%

\global\long\def\Im{\operatorname{Im}}%
\global\long\def\Re{\operatorname{Re}}%
\global\long\def\tr{\operatorname{tr}}%
\global\long\def\proj{\operatorname{proj}}%
\global\long\def\rank{\operatorname{rank}}%
\global\long\def\nullity{\operatorname{nullity}}%
\global\long\def\diag{\operatorname{diag}}%
\global\long\def\Diag{\operatorname{Diag}}%
\global\long\def\vec{\operatorname{vec}}%
\global\long\def\ddiag{\operatorname{ddiag}}%
\global\long\def\conj{\operatorname{conj}}%
\global\long\def\dist{\operatorname{dist}}%
\global\long\def\ker{\operatorname{Ker}}%
\global\long\def\op{\operatorname{op}}%
\global\long\def\grad{\operatorname{grad}}%
\global\long\def\hess{\operatorname{Hess}}%

\global\long\def\obs{\mathrm{obs}}%
\global\long\def\opt{\mathrm{opt}}%
\global\long\def\ub{\mathrm{ub}}%
\global\long\def\lb{\mathrm{lb}}%
\global\long\def\gnd{\mathrm{gnd}}%
\global\long\def\wls{\mathrm{wls}}%
\global\long\def\bus{\mathrm{bus}}%
\global\long\def\From{\mathrm{f}}%
\global\long\def\To{\mathrm{t}}%
\global\long\def\opt{\mathrm{opt}}%
\global\long\def\si{\mathrm{si}}%
\global\long\def\sr{\mathrm{SR}}%

\global\long\def\h#1{\hat{#1}}%
\global\long\def\dt#1{\dot{#1}}%
\global\long\def\t#1{\tilde{#1}}%
\global\long\def\o#1{\overline{#1}}%

\global\long\def\e{\mathbf{e}}%
\global\long\def\i{\mathrm{j}}%
\global\long\def\Aop{\mathcal{A}}%
\global\long\def\one{\mathbf{1}}%
\global\long\def\half{{\textstyle \frac{1}{2}}}%

\begin{abstract}
Power System State Estimation (PSSE) is commonly formulated as a
nonconvex weighted least-squares (WLS) problem, making global optimality
difficult both to attain and to certify. Recent work has shown that,
when voltage magnitudes are known to sufficient accuracy, the remaining
angle estimation subproblem can be reduced to phase synchronization
and solved effectively using spectral initialization and spectral
certification. This paper explains why these spectral methods succeed.
We prove that their behavior is governed by the measurement error,
normalized against the usual observability margin from the classical
literature. Below a fixed threshold, spectral initialization recovers
the true voltage angles to first-order accuracy, and the WLS estimator
is unique up to a global phase. Moreover, a zero-duality-gap spectral
certificate verifies recovery of this unique WLS estimate. In the
noiseless observable regime, spectral initialization exactly recovers
the true angles and certification is exact without local refinement.
Numerical experiments on standard benchmark systems support the theory
and show that the spectral methods remain effective beyond the conservative
regime covered by the guarantees. 
\end{abstract}

\begin{IEEEkeywords}
Power system state estimation, spectral analysis, state estimation,
mathematical programming
\end{IEEEkeywords}

\section{Introduction}

The reliable operation of modern electric power grids depends on Power
System State Estimation (PSSE), which provides the real-time network
state used for monitoring, control, and situational awareness \cite{monticelli2000electric,abur2004power,cheng2023survey}.
Its goal is to recover the voltage magnitudes and voltage phase angles
from noisy measurements of voltage magnitudes and complex power. The
standard approach \cite{schweppe1970power} formulates PSSE as a weighted
least-squares (WLS) problem and solves it by iteratively refining
an initial guess using the Gauss--Newton method.

The core challenge is that the WLS objective function is nonconvex,
and this makes global optimality both difficult to attain and difficult
to verify. With a sufficiently accurate initialization, Gauss--Newton
converges rapidly and reliably, but outside that regime, it can converge
slowly, fail to converge, or settle at a spurious local optimum. When
this happens, it is generally difficult to determine whether the failure
is caused by insufficient information in the measurements, for example
due to bad data \cite{merrill1971bad,koglin1990bad}, or by the numerical
behavior of the algorithm itself. Recent work~\cite{zhang2019spurious,guzel2024power}
has further showed that Gauss--Newton can converge to a plausible
but incorrect estimate of the true state. Since local optimality does
not imply global optimality, even a high-quality estimate usually
comes with no rigorous way to check that it is globally optimal. 

Recent work~\cite{guzel2024power} made progress on the nonconvexity
of PSSE by isolating the angle estimation subproblem. When the voltage
magnitudes are known to sufficient accuracy, estimating the remaining
voltage phase angles from noisy complex power measurements reduces
to a phase synchronization problem. This reduction makes spectral
methods applicable \cite{singer2011angular,bandeira2017tightness,boumal2016nonconvex}.
By computing eigenvectors of matrices constructed from the measurements,
one can obtain both \emph{spectral initialization}, a high-quality
initial estimate that is often already close to the true phase angles,
and \emph{spectral certification}, a post hoc certificate that the
recovered point is a globally optimal WLS estimate for the given data.
Experiments in~\cite{guzel2024power} showed that this approach can
solve the angle subproblem to certified global optimality, often with
little or no local refinement, at a cost comparable to a few Gauss--Newton
iterations.

Despite this strong empirical performance, we still do not know \emph{why}
spectral methods work so well for PSSE. The angle subproblem reduces
to a phase synchronization problem, but this reduction is not, by
itself, an explanation. Generic phase synchronization is a nonconvex
quadratic optimization problem whose global optimum is hard to attain
and hard to certify. Moreover, when the measurements are too few or
too noisy, accurate recovery of the angles is fundamentally impossible
for any method. Therefore, explaining the empirical success observed
in~\cite{guzel2024power} requires showing the phase synchronization
instances arising in realistic PSSE settings contain enough information
for near-recovery and admit exact certificates of global optimality.

This paper fills that theoretical gap, by providing a power-system-specific
explanation that relates spectral recovery and certification to observability,
measurement noise, and the structure of the network. Specifically,
we show that the behavior of spectral methods for the PSSE angle subproblem
is governed by a single normalized noise level 
\[
\eta\coloneqq\frac{K_{s}\|\epsilon_{s}\|_{\infty,\mathrm{obs}}+K_{u}\|\epsilon_{u}\|_{\infty}}{\rho},
\]
where $\|\epsilon_{s}\|_{\infty,\mathrm{obs}}$ and $\|\epsilon_{u}\|_{\infty}$
measure the maximum deviations in the observed complex power measurements
and in the voltage magnitude measurements, respectively; $\rho$ is
the observability margin from the classical PSSE literature~\cite{fetzer1975observability,krumpholz1980power};
and $K_{s},K_{u}$ are constants determined by the network and measurement
model. We prove in Section~\ref{sec:init} that when $\eta$ is below
a fixed threshold, spectral initialization and the WLS estimator both
recover the true phase angles to $O(\eta)$ accuracy. In this same
small-$\eta$ setting, we further show in Section~\ref{sec:cert}
that the WLS estimator is unique up to a global phase, and the spectral
certificate is guaranteed to verify whether a recovered point is this
unique WLS estimate, without any conservatism.

The resulting picture is surprisingly simple. In the noiseless observable
regime $\eta=0$, spectral initialization exactly recovers the true
angles, and the spectral certificate is exact without local refinement.
In observable low-noise regimes $\eta\approx0$, spectral initialization
is $O(\eta)$ accurate, and refinement recovers the unique WLS estimator,
with a spectral certificate verifying its recovery. Together, these
results give the first rigorous guarantee for globally solving and
certifying the nonconvex angle-estimation subproblem without a good
initial guess.

Our experiments in Section~\ref{sec:Experiments} test these predictions
directly. Across standard benchmark systems, the spectral initialization
error follows the linear scaling predicted by the theory, and spectral
certification successfully verifies global optimality after local
refinement. As the normalized noise level $\eta$ increases, either
because the observed measurement errors $\|\epsilon_{s}\|_{\infty,\mathrm{obs}}$
and $\|\epsilon_{u}\|_{\infty}$ grow or because the observability
margin $\rho$ deteriorates, the theoretical guarantees eventually
cease to apply. Nevertheless, our experiments observe that the spectral
methods continue to perform well empirically beyond this fairly conservative
regime. At sufficiently high measurement error or low observability,
however, accurate recovery and exact certification must break down,
as the recovery problem eventually becomes impossible to solve. Precisely
characterizing this transition remains an open question.

Our results place PSSE angle estimation within a broader literature
on provable spectral methods for phase synchronization and related
group-synchronization problems. Such models arise in rotation averaging
for computer vision~\cite{hartley2013rotation}, orientation estimation
in single-particle cryo-electron microscopy~\cite{bendory2020single},
rotation averaging and pose-graph SLAM in robotics~\cite{doherty2022performance},
sensor network localization~\cite{cucuringu2012sensor}, and phase
retrieval~\cite{iwen2020phase}. The closest theoretical point of
comparison that we are aware of is the robotics literature on spectral
initialization and certifiable geometric estimation~\cite{doherty2022performance,papalia2024certifiably}.
However, those results analyze different measurement models and do
not capture the power-system-specific roles of complex power injections,
voltage-magnitude perturbations, and the observability margin.

\subsection*{Notation}

Write $\odot$ as the element-wise Hadamard product; $\R_{+}^{n}\subseteq\R^{n}$
as the length-$n$ nonnegative real vectors; $\mathbb{Z}^{n}$ as
length-$n$ integer vectors; $\i\coloneqq\sqrt{-1}$; $\one=[1,1,\dots,1]^{T}$;
$\sigma_{i}(\cdot)$ as the $i$-th largest signular value. Take $|\cdot|$,
$\arg(\cdot),$ and $\exp(\cdot)$ elementwise. $\Diag:\C^{n}\to\C^{n\times n}$
and $\diag:\C^{n\times n}\to\C^{n}$ are the standard diagonal operators;
$\ddiag\coloneqq\Diag\circ\diag$. 

\section{Background}

\subsection{Complex power equations }

In the standard linear AC circuit model, a power network is represented
as a weighted graph with $n$ vertices, called buses, and $l$ edges,
called branches. The vector $v\in\mathbb{C}^{n}$ records the complex
voltage at each bus. Write $i_{\bus}\in\mathbb{C}^{n}$ as the net
complex current injected at each bus, and $i_{\From},i_{\To}\in\mathbb{C}^{l}$
as the currents at the two ends of each branch, with respect to a
fixed orientation of the graph. Since lines, transformers, and shunt
elements are modeled as linear circuit elements, all currents depend
linearly on the voltage vector: 
\begin{equation}
i_{\bus}=Y_{\bus}v,\qquad i_{\From}=Y_{\From}v,\qquad i_{\To}=Y_{\To}v.\label{eq:i_v}
\end{equation}
Here $Y_{\bus}\in\mathbb{C}^{n\times n}$ and $Y_{\From},Y_{\To}\in\mathbb{C}^{l\times n}$
are the standard admittance matrices associated with this circuit
model. We defer their construction from engineering data to standard
references~\cite{zimmerman2010matpower}.

The corresponding complex powers are obtained by multiplying the relevant
voltage phasor by the conjugate of the corresponding current phasor:
\begin{subequations}\label{eq:sdef} 
\begin{gather}
s_{\bus}=\conj(i_{\bus})\odot v=\conj(Y_{\bus}v)\odot v,\\
s_{\From}=\conj(i_{\From})\odot(F_{\From}v)=\conj(Y_{\From}v)\odot(F_{\From}v),\\
s_{\To}=\conj(i_{\To})\odot(F_{\To}v)=\conj(Y_{\To}v)\odot(F_{\To}v).
\end{gather}
\end{subequations}Here $F_{\From},F_{\To}\in\{0,1\}^{l\times n}$
select the voltage at the ``from'' and ``to'' ends of each branch,
respectively, so that $F=F_{\From}-F_{\To}$ is the usual oriented
incidence matrix of the graph.

We now collect the three types of complex power quantities in (\ref{eq:sdef})
into a single vector. Let $m=n+2l$, and define the map $s:\mathbb{C}^{n}\to\mathbb{C}^{m}$
by 
\begin{equation}
s(v)\coloneqq\begin{bmatrix}s_{\bus}(v)\\
s_{\From}(v)\\
s_{\To}(v)
\end{bmatrix}=\begin{bmatrix}\conj(Y_{\bus}v)\odot v\\
\conj(Y_{\From}v)\odot(F_{\From}v)\\
\conj(Y_{\To}v)\odot(F_{\To}v)
\end{bmatrix}.\label{eq:sv_def}
\end{equation}
Equation (\ref{eq:sv_def}) is the basic complex power-flow model
used throughout the paper. In the sequel, the matrices $Y_{\bus},Y_{\From},Y_{\To},F_{\From},F_{\To}$
are treated as known problem data.

\subsection{Weighted least-squares formulation}

The classical power system state estimation problem seeks to recover
an unknown ground-truth voltage phasor $v_{\gnd}\in\mathbb{C}^{n}$
from a partial and noisy collection of complex power measurements
and voltage magnitude measurements. The standard bookkeeping convention
is to work with a complete set of measurements
\begin{equation}
\tilde{s}=s(v_{\gnd})+\epsilon_{s}\in\mathbb{C}^{m},\qquad\tilde{u}=|v_{\gnd}|+\epsilon_{u}\in\mathbb{R}^{n},\label{eq:noisy_meas}
\end{equation}
and later remove unobserved entries by treating their noise variance
as infinite. Define the real-valued measurement map $h:\mathbb{R}^{2n}\to\mathbb{R}^{2m+n}$
and the observed measurement vector $\tilde{h}\in\mathbb{R}^{2m+n}$
as respectively 
\[
h(\theta,u)=\begin{bmatrix}\Re\{s(u\odot\exp(\i\theta))\}\\
\Im\{s(u\odot\exp(\i\theta))\}\\
u
\end{bmatrix},\quad\tilde{h}=\begin{bmatrix}\Re\{\tilde{s}\}\\
\Im\{\tilde{s}\}\\
\tilde{u}
\end{bmatrix}.
\]
Parameterizing $v=u\odot\exp(\mathrm{i}\theta),$ the standard weighted
least-squares estimator solves
\begin{equation}
(\theta_{\wls},u_{\wls})=\arg\min_{(\theta,u)\in\R^{2n}}\|h(\theta,u)-\tilde{h}\|_{w}^{2},\tag{WLS}\label{eq:wls}
\end{equation}
where given nonnegative weights $w\in\mathbb{R}_{+}^{N}$, the weighted
norm on $\R^{N}$ (or $\C^{N}$) is defined as 
\[
\|z\|_{w}\coloneqq\left(\sum_{i=1}^{n}w_{i}|z_{i}|^{2}\right)^{1/2}.
\]
Hence, large weights emphasize reliable measurements, small weights
downweight unreliable measurements, and zero weights implement the
infinite-variance convention for unobserved entries by removing them
from the objective.

\subsection{Ambiguity of Absolute Phase}

The solution of (\ref{eq:wls}) is never unique, because the absolute
phase reference is unobservable: 
\begin{equation}
h(\theta,u)=h(\theta+\theta_{0}\one,u)\qquad\text{for all }\theta_{0}\in\R.\label{eq:abs_phase}
\end{equation}
The standard approach removes this ambiguity by choosing a reference
angle, for example $\theta_{1}=0$ when bus~1 is used as the slack
bus. This is convenient for numerical implementations, but it makes
the formulation depend on an arbitrary reference bus. An alternative,
common in the optimization literature, is to retain the global angle
ambiguity and compare estimates after allowing the best constant angle
shift. Accordingly, we define the root-mean-square angular error
\begin{equation}
\rmse(\theta,\theta_{\gnd})\coloneqq\frac{1}{\sqrt{n}}\min_{\theta_{0}\in\R,\;k\in\mathbb{Z}^{n}}\|\theta-\theta_{\gnd}-\theta_{0}\one-2\pi k\|.\label{eq:rmsedef}
\end{equation}
The factor $1/\sqrt{n}$ makes this an RMS error per bus, measured
in radians. If $\rmse(\theta,\theta_{\gnd})=0$, then the estimated
angles exactly recover the ground truth up to the unknown global phase. 

For theoretical analysis, the same reference-angle ambiguity is more
conveniently expressed in terms of the corresponding unit phasors
$x=\exp(i\theta)$ and $x_{\gnd}=\exp(i\theta_{\gnd})$. Define
\begin{equation}
\dist(x,x_{\gnd})\coloneqq\min_{\theta_{0}\in\R}\|x-x_{\gnd}\exp(\i\theta_{0})\|.\label{eq:distdef}
\end{equation}
This phasor distance and the RMS angular error are equivalent:
\begin{equation}
\frac{1}{\sqrt{n}}\dist(x,x_{\gnd})\le\rmse(\theta,\theta_{\gnd})\le\frac{\pi}{2\sqrt{n}}\dist(x,x_{\gnd}).\label{eq:rmse_dist}
\end{equation}

\subsection{Power System Observability}\label{subsec:observe}

A basic prerequisite for state estimation is that the measurements
contain enough information to determine the unknown state~\cite{fetzer1975observability}.
Classical observability tests~\cite{fetzer1975observability,krumpholz1980power,wu1985network,monticelli1985network}
are based on the weighted measurement Jacobian 
\begin{equation}
H(\theta,u)\coloneqq\diag(w)^{1/2}\begin{bmatrix}\dfrac{\partial h(\theta,u)}{\partial\theta} & \dfrac{\partial h(\theta,u)}{\partial u}\end{bmatrix}.\label{eq:wls_jacobian}
\end{equation}
The network is said to be \emph{observable} at $(\theta_{\gnd},u_{\gnd})$
if this Jacobian has rank $2n-1$~\cite[Appendix~B]{krumpholz1980power}\cite[Thm.~1]{wu1985network},
with the missing rank corresponding to the global phase $(\one,0)$. 

Following Fetzer and Anderson~\cite{fetzer1975observability}, we
quantify the strength of this condition by the \emph{observability
margin}
\begin{equation}
\rho\coloneqq\sigma_{2n-1}\!\left(H(\theta_{\gnd},u_{\gnd})\right).\label{eq:observability_margin}
\end{equation}
The network is observable if and only if $\rho>0$, while the magnitude
of $\rho$ quantifies the local conditioning of the inverse problem
near the ground truth.

\section{Angle Estimation as Phase Sychronization}

The standard approach to solving the weighted least-squares problem
(\ref{eq:wls}) is to apply Gauss--Newton. Guzel and Zhang~\cite{guzel2024power}
observed that, under a particular choice of weights, the same estimation
problem admits an exact reformulation as phase synchronization, hence
making it amenable to spectral methods.

\begin{asm}\label{asm} Partition the weights in (\ref{eq:wls})
as $w=(w_{p},w_{q},w_{u})$, corresponding respectively to active
power, reactive power, and voltage magnitude measurements. Assume
that $w_{u}\to\infty$ and that $w_{p}=w_{q}=d$ for some $d\in\R_{+}^{m}$.\end{asm}

Assumption~\ref{asm} fixes the voltage magnitudes at their measured
values, as in $u=\tilde{u}$, and rewrites the real and reactive power
residuals into a complex power residual
\[
\|\Re\{s(v)\}-\tilde{p}\|_{w_{p}}^{2}+\|\Im\{s(v)\}-\tilde{q}\|_{w_{p}}^{2}=\|s(v)-\tilde{s}\|_{d}^{2}
\]
for $\tilde{s}=\tilde{p}+\i\tilde{q}$. This way, the weighted least-squares
problem (\ref{eq:wls}) reduces to 
\begin{align}
 & \min_{(\theta,u)\in\R^{2n}}\|s(u\odot\exp(\i\theta))-\tilde{s}\|_{d}^{2}\quad\text{ s.t. }u=\tilde{u}\nonumber \\
= & \min_{\theta\in\R^{n}}\|s(\tilde{u}\odot\exp(\i\theta))-\tilde{s}\|_{d}^{2}\nonumber \\
= & \min_{x\in\C_{1}^{n}}\|s(\tilde{u}\odot x)-\tilde{s}\|_{d}^{2}\label{eq:wls_angle}
\end{align}
where the complex unit $n$-torus is denoted
\begin{equation}
\C_{1}^{n}=\left\{ x\in\C^{n}:|x_{i}|=1,\ \forall i=1,\dots,n\right\} .\label{eq:Cone}
\end{equation}
The reduced problem (\ref{eq:wls_angle}) fixes $u=\tilde{u}$ and
seeks to recover the angles $\theta_{\gnd}$ encoded by the ground
truth phase vector 
\begin{equation}
x_{\gnd}\coloneqq\exp(\i\theta_{\gnd})\label{eq:zdef}
\end{equation}
from noisy complex power measurements $\tilde{s}\approx s(\tilde{u}\odot x_{\gnd})$. 

In Section~\ref{subsec:deriv}, we revisit the reformulation of \cite{guzel2024power}
using a more abstract notation that exposes the underlying algebraic
structure. This yields a streamlined derivation of the equivalence
between (\ref{eq:wls_angle}) and the homogeneous quadratic problem
\begin{align}
x_{\wls}\coloneqq\exp(\i\theta_{\wls}) & =\arg\min_{x\in\C_{1}^{n}}\norm{Cx}^{2}{\tag{P}},\label{eq:P}
\end{align}
hence identifying (\ref{eq:wls}) under Assumption~\ref{asm} as
a special case of phase synchronization \cite[Eqn.~13]{singer2011angular}\cite{boumal2016nonconvex,bandeira2017tightness}
\[
\max_{x\in\C^{n}}\quad x^{*}Hx\quad\text{ s.t. }|x_{1}|=\cdots=|x_{n}|=1,
\]
with $H=-C^{*}C$. Problem (\ref{eq:P}) serves as the starting point
for the spectral methods analyzed in the next two sections. The practical
scope of Assumption~\ref{asm} is discussed in Section~\ref{subsec:assumptions}.

\subsection{Reduction to phase synchronization}\label{subsec:deriv}

We begin by introducing notation that makes the algebraic structure
of the complex power measurements explicit. The complex power flow
operator can be written as
\begin{equation}
s:\C^{n}\to\C^{m},\quad s(v)=\conj(Av)\odot(Bv)\label{eq:sdef_AB}
\end{equation}
where $A$ maps the list of bus voltages $v$ to the current phasors
underlying our power measurements, whereas $B$ selects the voltage
phasor at the terminal where each power measurement is taken:
\begin{equation}
A=\begin{bmatrix}Y_{\bus}\\
Y_{\From}\\
Y_{\To}
\end{bmatrix}\in\C^{m\times n},\quad B=\begin{bmatrix}I\\
E_{\From}\\
E_{\To}
\end{bmatrix}\in\{0,1\}^{m\times n}.
\end{equation}
Recall from (\ref{eq:sv_def}) that $m=n+2l$ for a system with $n$
buses and $l$ lines. Since each row of $B$ contains exactly one
nonzero entry, $B$ satisfies the following elementary but useful
identity.
\begin{lem}
\label{lem:distribute}Let $B\in\{0,1\}^{m\times n}$ satisfy $B\one_{n}=\one_{m}$.
Then, for any $x,y\in\C^{n}$, $B(x\odot y)=(Bx)\odot(By)$.
\end{lem}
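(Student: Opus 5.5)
The plan is a direct entrywise computation that exploits the row structure of $B$. Since $B$ has entries in $\{0,1\}$ and $B\one_{n}=\one_{m}$, each row of $B$ sums to one. A row of nonnegative integers summing to one must contain exactly one entry equal to $1$, with all other entries $0$. So I would first record that for each row index $i\in\{1,\dots,m\}$ there is a unique column index $\pi(i)\in\{1,\dots,n\}$ with $B_{i,\pi(i)}=1$ and $B_{ij}=0$ for $j\neq\pi(i)$. Equivalently, $B$ acts as a selection (gather) operator: $(Bz)_{i}=z_{\pi(i)}$ for every $z\in\C^{n}$.

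With this description the identity follows coordinate by coordinate. For each $i$,
\[
\bigl(B(x\odot y)\bigr)_{i}=(x\odot y)_{\pi(i)}=x_{\pi(i)}\,y_{\pi(i)}=(Bx)_{i}\,(By)_{i}=\bigl((Bx)\odot(By)\bigr)_{i}.
\]
Since this holds for all $i$, the vector identity holds.

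The only step requiring any care is the first one: deducing from $B\in\{0,1\}^{m\times n}$ and $B\one_{n}=\one_{m}$ that each row has exactly one nonzero entry. This is immediate because the entries are nonnegative integers summing to $1$. Both hypotheses are needed. For example, a row with two entries equal to $\tfrac{1}{2}$ would give $\tfrac{1}{2}(x_{1}y_{1}+x_{2}y_{2})$ on the left but $\tfrac{1}{4}(x_{1}+x_{2})(y_{1}+y_{2})$ on the right, and these differ in general. There is no real obstacle beyond this.

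Finally, I would note that the block matrix $B=[I;E_{\From};E_{\To}]$ defined above satisfies the hypothesis, since its rows are rows of the identity or single-terminal selectors. This is what makes the lemma applicable later in the reduction to (\ref{eq:P}).
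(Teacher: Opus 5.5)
Your proof is correct and is the same argument the paper relies on. The paper gives no separate proof and justifies the lemma only by noting that each row of $B$ has exactly one nonzero entry. Your entrywise selection argument, $(Bz)_{i}=z_{\pi(i)}$, is that remark written out in full.
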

We now use our new notation to derive the reformulation of (\ref{eq:wls})
into (\ref{eq:P}) as first introduced in \cite{guzel2024power}.
\global\long\def\all{\mathrm{all}}%

\begin{prop}
\label{prop:Cmat}Given model matrices $A\in\C^{m\times n}$ and $B\in\{0,1\}^{m\times n}$
with $B\one_{n}=\one_{m}$, let $s$ be as defined in (\ref{eq:sdef_AB}).
For any voltage magnitudes $\tilde{u}\in\R_{+}^{n}$, complex power
measurements $\tilde{s}\in\C^{m}$, and weights $d\in\R_{+}^{m}$,
we have
\begin{gather*}
\|s(\tilde{u}\odot x)-\tilde{s}\|_{d}^{2}=\|Cx\|^{2}\quad\text{for all }x\in\C_{1}^{n},
\end{gather*}
where
\begin{equation}
C\coloneqq\Diag(d)^{1/2}\left[\Diag(B\tilde{u})A\Diag(\tilde{u})-\Diag(\conj(\tilde{s}))B\right].\label{eq:Cdef}
\end{equation}
\end{prop}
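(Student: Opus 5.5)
The plan is to show that, for each unit-modulus $x$, the residual vector $s(\tilde{u}\odot x)-\tilde{s}$ differs from $\Diag(d)^{-1/2}Cx$ only by entrywise multiplication by a unit-modulus vector, namely $\conj(Bx)$. Then the weighted norms agree entry by entry. Throughout we write $v=\tilde{u}\odot x$ with $x\in\C_{1}^{n}$.

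First I would expand the residual using (\ref{eq:sdef_AB}): $s(v)=\conj(A v)\odot(Bv)$. Applying Lemma~\ref{lem:distribute} gives $Bv=(B\tilde{u})\odot(Bx)$. Hence
\[
s(v)-\tilde{s}=\conj(A\Diag(\tilde{u})x)\odot(B\tilde{u})\odot(Bx)-\tilde{s}.
\]
Next I would use that $Bx\in\C_{1}^{m}$. This holds because each row of $B$ has exactly one entry equal to one, so $(Bx)_{i}=x_{j(i)}$ for some index $j(i)$. Consequently $\tilde{s}=\tilde{s}\odot\conj(Bx)\odot(Bx)$. Factoring out $Bx$ then gives
\[
s(v)-\tilde{s}=(Bx)\odot\bigl[\conj(A\Diag(\tilde{u})x)\odot(B\tilde{u})-\tilde{s}\odot\conj(Bx)\bigr].
\]
Since $|(Bx)_{i}|=1$ and the weighted norm depends only on entry moduli, this factor can be dropped.

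Then I would take the complex conjugate of the bracketed vector, which also preserves moduli. Because $\tilde{u}$ and $B$ are real, the conjugate equals
\[
\Diag(B\tilde{u})A\Diag(\tilde{u})x-\Diag(\conj(\tilde{s}))Bx.
\]
This is exactly $\Diag(d)^{-1/2}Cx$ in the notation of (\ref{eq:Cdef}), with the $\Diag(d)^{1/2}$ factor accounting for the weights. Therefore $\|s(\tilde u\odot x)-\tilde s\|_{d}^{2}=\sum_{i}d_{i}|(\cdot)_{i}|^{2}=\|Cx\|^{2}$.

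I expect no real obstacle. The only delicate point is the conjugation step: the expression $\conj(Av)$ must become $A v$ after conjugating, and the conjugate of $\tilde{s}$ must appear. Getting these right depends on realness of $\tilde u$ and $B$ and on the unit-modulus property of $Bx$, which is where $x\in\C_{1}^{n}$ and $B\one=\one$ with $0/1$ entries are used.
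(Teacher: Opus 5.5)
Your proof is correct and is essentially the paper's argument. The paper conjugates the residual first and factors out $\conj(Bx)$ as the unitary $Q^{*}=\Diag(\conj(Bx))$, whereas you factor out $Bx$ first and then conjugate; the ingredients are the same (Lemma~\ref{lem:distribute}, $Bx\in\C_{1}^{m}$, realness of $B\tilde{u}$, and the fact that entrywise unit-modulus multiplication and conjugation leave the weighted norm unchanged). One small fix: $\Diag(d)^{-1/2}$ is undefined when some $d_i=0$, which is the unobserved-measurement case the paper explicitly allows. Phrase the last step as $Cx=\Diag(d)^{1/2}w$, with $w$ the conjugated bracket, since that is all your final sum actually uses.
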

\begin{IEEEproof}
Let $D=\Diag(d)$ and write $C=D^{1/2}C_{\all}$ for clarity. Since
$x\in\C_{1}^{n}$, Lemma~\ref{lem:distribute} gives
\[
(Bx)\odot\conj(Bx)=B(x\odot\conj(x))=B\one=\one.
\]
Thus $Bx\in\C_{1}^{m}$, and the diagonal matrix $Q=\Diag(Bx)$ is
unitary. We next rewrite the residual. Using Lemma~\ref{lem:distribute},
\begin{align}
 & \conj(s(\tilde{u}\odot x)-\tilde{s})\nonumber \\
= & \conj(B(\tilde{u}\odot x))\odot(A(\tilde{u}\odot x))-\conj(\tilde{s})\nonumber \\
= & (B\tilde{u})\odot\conj(Bx)\odot(A(\tilde{u}\odot x))-\conj(\tilde{s})\nonumber \\
= & \conj(Bx)\odot[(B\tilde{u})\odot(A(\tilde{u}\odot x))-\conj(\tilde{s})\odot(Bx)]\nonumber \\
= & Q^{*}C_{\all}x.\label{eq:Q*Jx}
\end{align}
The fourth line uses uses $(Bx)\odot\conj(Bx)=\one$ to factor out
$\conj(Bx)$ from the second term. Finally, since $D^{1/2}$ is real
diagonal, it commutes with $Q^{*}$. Moreover, $Q$ is unitary. Therefore,
\begin{align*}
\|s(\tilde{u}\odot x)-\tilde{s}\|_{d}^{2} & =\|D^{1/2}[s(\tilde{u}\odot x)-\tilde{s}]\|^{2}\\
 & =\|D^{1/2}\conj(s(\tilde{u}\odot x)-\tilde{s})\|^{2}\\
 & =\|D^{1/2}Q^{*}C_{\all}x\|^{2}=\|Q^{*}D^{1/2}C_{\all}x\|^{2}\\
 & =\|D^{1/2}C_{\all}x\|^{2}=\|Cx\|^{2}.
\end{align*}
This proves the claim. 
\end{IEEEproof}

\subsection{Practical implications of Assumption~\ref{asm}}\label{subsec:assumptions}

Assumption~\ref{asm} has two distinct modeling implications. First,
$w_{u}\to\infty$ reduces the full state-estimation problem into an
angle estimation subproblem. This is appropriate when the measured
magnitudes are accurate enough that the dominant uncertainty is in
the power-flow measurements. If $\tilde{u}=|v_{\gnd}|$, then the
angle error is driven entirely by the power-measurement error. But
if $\tilde{u}$ is inaccurate, then fixing $u=\tilde{u}$ introduces
a modeling error, and this error sets a floor on the best achievable
accuracy in estimating the angles. 

Second, $w_{p}=w_{q}=d$ requires active and reactive power residuals
to be weighted in matched pairs. Each complex power measurement is
either used through both components $(\tilde{p}_{i},\tilde{q}_{i})$,
or omitted from the reduced problem. If active and reactive components
have different noise levels, or if only one component is available,
then the exact reduction to (\ref{eq:P}) no longer applies directly.
A natural way to address this limitation is to incorporate the spectral
angle update into an end-to-end solver for (\ref{eq:wls}), in which
inaccurate or missing measurements are estimated or updated separately.
We leave this extension as future work.

\section{Spectral Initialization}\label{sec:init}

At the end of the previous section, we showed that the standard Power
System State Estimation formulation (\ref{eq:wls}) under Assumption~\ref{asm}
is equivalent to (\ref{eq:P}). Spectral initialization works by replacing
the individual unit-modulus constraints $|x_{i}|=1$ by the single
spherical constraint $\|x\|^{2}=n$ to yield a spectral relaxation
\begin{align}
\min_{x\in\C_{1}^{n}}\|Cx\|^{2} & =\min_{x\in\C^{n}}\|Cx\|^{2}\quad\text{s.t. }|x_{i}|^{2}=1,\ i=1,\dots,n\nonumber \\
 & \ge\min_{x\in\C^{n}}\|Cx\|^{2}\quad\text{s.t. }\sum_{i=1}^{n}|x_{i}|^{2}=n\nonumber \\
 & =n\lambda_{\min}(C^{*}C).\label{eq:spectral_relax}
\end{align}
The solution is any bottom eigenvector $x_{\min}$ of the matrix $C^{*}C$
scaled to satisfy $\|x_{\min}\|^{2}=n$. To turn the solution of the
relaxation into a feasible point for the original problem (\ref{eq:P}),
we simply project $x_{\min}$ back onto $\C_{1}^{n}$. The Euclidean
projection 
\[
\proj(v)\coloneqq\arg\min_{x\in\C_{1}^{n}}\|v-x\|
\]
has a (possibly nonunique) closed form expression 
\begin{equation}
[\proj(v)]_{i}=\begin{cases}
\frac{v_{i}}{|v_{i}|} & v_{i}\ne0,\\
1 & v_{i}=0.
\end{cases}\label{eq:proj}
\end{equation}
The spectral initialization is therefore 
\begin{equation}
x_{\si}\coloneqq\proj(x_{\min}),\quad v_{\si}\coloneqq\tilde{u}\odot x_{\si},\quad\theta_{\si}\coloneqq\arg x_{\si}.\label{eq:si_def}
\end{equation}
Guzel and Zhang~\cite{guzel2024power} observed experimentally that
$x_{\si}$ is often already a highly accurate solution of the angle
subproblem, and that $v_{\si}$ provides an excellent initialization
for the original (\ref{eq:wls}) problem. We now explain why this
happens.

First consider the noiseless case. Substituting $\epsilon_{s}=0$
and $\epsilon_{u}=0$ into (\ref{eq:noisy_meas}) yields a complete
set of perfect measurements
\[
\tilde{u}=u_{\gnd},\quad\tilde{s}=s_{\gnd}\coloneqq s(u_{\gnd}\odot\exp(\i\theta_{\gnd})).
\]
Define the corresponding noiseless $C$ matrix in (\ref{eq:Cdef}),
explicitly
\begin{multline}
C_{0}\coloneqq\Diag(d)^{1/2}[\Diag(Bu_{\gnd})A\Diag(u_{\gnd})\\
-\Diag(\conj(s_{\gnd}))B].\label{eq:C0def}
\end{multline}
The key observation is that the ground-truth phase vector $x_{\gnd}=\exp(\i\theta_{\gnd})$
lies in the right nullspace of $C_{0}$:
\begin{equation}
C_{0}x_{\gnd}=0.\label{eq:C0_null}
\end{equation}
If additionally $\sigma_{n-1}(C_{0})>0,$ then $C_{0}$ has a one-dimensional
nullspace, spanned by $x_{\gnd}$. In other words, every bottom eigenvector
$x_{\min}$ of $C_{0}^{*}C_{0}$ is a scalar multiple of the ground
truth $x_{\gnd}$, as in $x_{\min}=\alpha x_{\gnd}$ for $\alpha\in\C\backslash\{0\}$.
This is the fundamental reason why spectral initialization (\ref{eq:si_def})
exactly recovers the ground truth, just as Guzel and Zhang~\cite{guzel2024power}
had experimentally observed. But if $\sigma_{n-1}(C_{0})=0$, then
$C_{0}^{*}C_{0}$ also has bottom eigenvectors orthogonal to $x_{\gnd}$,
so spectral initialization can fail to recover $x_{\gnd}$, even with
noiseless measurements. 

In the general noisy case with $\epsilon_{s}\ne0$ and $\epsilon_{u}\ne0$,
the corresponding $C$ matrix in (\ref{eq:Cdef}) is perturbed from
the noiseless $C_{0}$ matrix defined above in (\ref{eq:C0def}).
Nevertheless, if this perturbation is small relative to $\sigma_{n-1}(C_{0})$,
then the bottom eigenspace of $C^{*}C$ remains close to the nullspace
of $C_{0}$, so the spectral initializer $x_{\si}$ remains close
to $x_{\gnd}$.
\begin{lem}
\label{lem:si}For $C,C_{0}\in\C^{m\times n}$, let $x_{\gnd}\in\C^{n}$
satisfy $C_{0}x_{\gnd}=0$, and let $x_{\si}=\proj(x_{\min})$, where
$x_{\min}$ is a bottom eigenvector of $C^{*}C$. If $\sigma_{n-1}(C_{0})>0$
and
\begin{equation}
\|C-C_{0}\|_{\op}\le\frac{1}{3}\sigma_{n-1}(C_{0}),\label{eq:main_mle_assump}
\end{equation}
then 
\begin{equation}
\dist(x_{\si},x_{\gnd})\le8\sqrt{2}\,\frac{\|(C-C_{0})x_{\gnd}\|}{\sigma_{n-1}(C_{0})}.\label{eq:main_mle}
\end{equation}
\end{lem}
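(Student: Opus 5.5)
The plan is a Davis--Kahan-type argument carried out by hand, followed by a projection step. Throughout, write $E\coloneqq C-C_{0}$ and $\sigma\coloneqq\sigma_{n-1}(C_{0})$. I assume (as in the application) that $x_{\gnd}\in\C_{1}^{n}$, so $\|x_{\gnd}\|=\sqrt{n}$, and that $x_{\min}$ is scaled so that $\|x_{\min}\|=\sqrt{n}$. This scaling is harmless, because $\proj$ is invariant under positive rescaling.

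\emph{Step 1: the residual of $x_{\min}$ is small.} Since $x_{\min}$ minimizes $\|Cx\|$ over the sphere $\|x\|=\sqrt{n}$, and $x_{\gnd}$ lies on that sphere, we get
\[
\|Cx_{\min}\|\le\|Cx_{\gnd}\|=\|C_{0}x_{\gnd}+Ex_{\gnd}\|=\|Ex_{\gnd}\|.
\]

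\emph{Step 2: the component of $x_{\min}$ orthogonal to $x_{\gnd}$ is small.} Decompose $x_{\min}=\alpha\hat{x}+y$, where $\hat{x}=x_{\gnd}/\sqrt{n}$, $y\perp x_{\gnd}$, and $|\alpha|^{2}+\|y\|^{2}=n$.

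First, the kernel of $C_{0}$ is spanned by $x_{\gnd}$, because $C_{0}x_{\gnd}=0$ and $\sigma>0$. Hence $\|C_{0}y\|\ge\sigma\|y\|$.

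Next, expand $Cx_{\min}=\alpha E\hat{x}+C_{0}y+Ey$. Using $|\alpha|\le\sqrt{n}$, this gives $\|\alpha E\hat{x}\|\le\|Ex_{\gnd}\|$. Combining with Step 1,
\[
\sigma\|y\|\le\|C_{0}y\|\le\|Cx_{\min}\|+\|\alpha E\hat{x}\|+\|E\|_{\op}\|y\|\le2\|Ex_{\gnd}\|+\tfrac{\sigma}{3}\|y\|.
\]
The last inequality uses (\ref{eq:main_mle_assump}). Rearranging yields $\|y\|\le3\|Ex_{\gnd}\|/\sigma$.

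\emph{Step 3: $x_{\min}$ is close to a global-phase rotation of $x_{\gnd}$.} Choose $\theta_{0}=\arg\alpha$. Then
\[
\|x_{\min}-e^{\i\theta_{0}}x_{\gnd}\|^{2}=(\sqrt{n}-|\alpha|)^{2}+\|y\|^{2}.
\]
Because $|\alpha|=\sqrt{n-\|y\|^{2}}$, we have $\sqrt{n}-|\alpha|\le\|y\|^{2}/(\sqrt{n}+|\alpha|)\le\|y\|$. Therefore
\[
\|x_{\min}-e^{\i\theta_{0}}x_{\gnd}\|\le\sqrt{2}\,\|y\|.
\]

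\emph{Step 4: projection at most doubles the error.} Let $z=e^{\i\theta_{0}}x_{\gnd}\in\C_{1}^{n}$. Entrywise, $[\proj(x_{\min})]_{i}$ is a nearest unit-modulus point to $(x_{\min})_{i}$, and this holds also when $(x_{\min})_{i}=0$. Since $z_{i}$ is itself a unit-modulus point, $|[\proj(x_{\min})]_{i}-(x_{\min})_{i}|\le|(x_{\min})_{i}-z_{i}|$. The triangle inequality then gives
\[
|[\proj(x_{\min})]_{i}-z_{i}|\le2|(x_{\min})_{i}-z_{i}|.
\]
Summing over $i$ yields $\dist(x_{\si},x_{\gnd})\le2\|x_{\min}-z\|$.

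Chaining Steps 2--4 gives
\[
\dist(x_{\si},x_{\gnd})\le2\sqrt{2}\cdot3\,\frac{\|Ex_{\gnd}\|}{\sigma}=6\sqrt{2}\,\frac{\|Ex_{\gnd}\|}{\sigma},
\]
which is within the stated constant $8\sqrt{2}$.

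The main obstacle is Step 2. The difficulty is getting a bound proportional to $\|Ex_{\gnd}\|$ rather than to $\|E\|_{\op}\sqrt{n}$. The key is to compare Rayleigh quotients against $x_{\gnd}$ itself in Step 1, and to let the operator-norm hypothesis (\ref{eq:main_mle_assump}) act only on the orthogonal part $y$, where it can be absorbed into the left-hand side. If the constants in Step 3 turn out tighter than I expect, the slack between $6\sqrt{2}$ and $8\sqrt{2}$ leaves room for a cruder bound there, such as $\|x_{\min}-z\|\le2\|y\|$.
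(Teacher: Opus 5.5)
Your proof is correct and follows essentially the paper's route. Like the paper, you compare against $x_{\gnd}$ by optimality, control the component orthogonal to $x_{\gnd}$ through $\sigma_{n-1}(C_0)$ on $x_{\gnd}^{\perp}$, pay $\sqrt{2}$ to pass to $\dist$, and pay a factor $2$ for the projection. Your Step~4 is exactly the property the paper cites as ``2-Lipschitz'', and your assumption $x_{\gnd}\in\C_1^n$ is the one the paper makes through Lemma~\ref{lem:dist_xz}. The one real difference is the key step. You bound the orthogonal part by a linear triangle inequality, $\|y\|\le 2\|Ex_{\gnd}\|/(\sigma_{n-1}(C_0)-\|E\|_{\op})$. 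The paper instead expands $\|Cx_{\gnd}\|^2-\|Cx\|^2\ge 0$ into a quadratic in $\|Cr\|$, which costs an extra factor $(1-\|Ex_{\gnd}\|^2/(n\mu^2))^{-1}$ and a side condition $\mu>\|Ex_{\gnd}\|/\sqrt{n}$; that is why you end with the sharper constant $6\sqrt{2}$ instead of $8\sqrt{2}$.
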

We defer the proof to Section~\ref{subsec:specinit}. We conclude
therefore that $\sigma_{n-1}(C_{0})$ plays the role of an angle-observability
margin, determining both the size of a tolerable perturbation in $C$,
as well as the accuracy of the resulting estimation $x_{\si}$. In
fact, under Assumption~\ref{asm}, it exactly coincides with the
classical observability margin from Section~\ref{subsec:observe}.
\begin{lem}
\label{lem:C0_observability} Let $\rho$ denote the observability
margin defined in (\ref{eq:observability_margin}). Under Assumption~\ref{asm},
the matrix $C_{0}$ in (\ref{eq:C0def}) satisfies $\sigma_{n-1}(C_{0})=\rho.$
\end{lem}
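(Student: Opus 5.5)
The plan is to compute both sides explicitly in terms of the same linear map and then compare. Throughout, let $v_{\gnd}=u_{\gnd}\odot x_{\gnd}$ and $D=\Diag(d)$.

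\textbf{Step 1: reduce $\rho$ to an angle-only Jacobian.} Under Assumption~\ref{asm}, split $H(\theta_{\gnd},u_{\gnd})$ into its $\theta$-columns and its $u$-columns. The rows for voltage-magnitude measurements are $[\,0,\ w_u^{1/2}I\,]$, and the power rows carry weights $d$ on both the real and the imaginary parts. As $w_u\to\infty$, $n$ singular values of $H$ diverge. I will argue, either by a Schur-complement / Courant--Fischer argument or by a direct perturbation argument, that the remaining $n$ singular values converge to those of the angle Jacobian
\[
J_\theta\coloneqq D^{1/2}\begin{bmatrix}\partial_\theta\Re s\\ \partial_\theta\Im s\end{bmatrix}.
\]
The reason is that the $\theta$-columns vanish on the magnitude rows, so projecting them away from the span of the $u$-columns changes them by $O(w_u^{-1/2})$. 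Hence $\rho=\sigma_{n-1}(J_\theta)$, with the missing rank given by $J_\theta\one=0$; I interpret $\rho$ in (\ref{eq:observability_margin}) under Assumption~\ref{asm} as this limit.

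\textbf{Step 2: identify $J_\theta$ with $C_0$ restricted to tangent directions.} By the identity (\ref{eq:Q*Jx}) in the proof of Proposition~\ref{prop:Cmat}, applied with noiseless data,
\[
\conj\bigl(s(u_{\gnd}\odot x)-s_{\gnd}\bigr)=\Diag(Bx)^{*}C_0x
\]
holds for every $x\in\Cone$. Set $x=x_{\gnd}\odot\exp(\i\delta)$ with $\delta\in\R^n$ and differentiate at $\delta=0$. The term where the derivative falls on $\Diag(Bx)^*$ is multiplied by $C_0x_{\gnd}=0$ and drops out. Because $\Diag(Bx_{\gnd})$ is unitary, this gives
\[
\|J_\theta\delta\|=\|C_0\Diag(x_{\gnd})\delta\|\qquad\text{for all }\delta\in\R^n.
\]
Let $K\coloneqq\Diag(\conj(Bx_{\gnd}))\,C_0\Diag(x_{\gnd})$. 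Then $\rho$ is the $(n-1)$-st singular value of the real-linear map $\delta\mapsto K\delta$ from $\R^n$ to $\C^m\cong\R^{2m}$. Meanwhile $\sigma_{n-1}(C_0)=\sigma_{n-1}(K)$, since $K$ differs from $C_0$ only by unitary diagonal factors.

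\textbf{Step 3 (main obstacle): real versus complex singular values.} Writing $z=a+\i b$ with $a,b\in\R^n$,
\[
\|Kz\|^2=\|Ka\|^2+\|Kb\|^2+2\,\Im\bigl(a^{T}K^{*}Kb\bigr)
\]
(the cross term $2\Re\langle Ka,\i Kb\rangle$ equals $2\Im(a^TK^*Kb)$, with the inner product linear in its second argument). So the two singular values coincide provided $K^*K$ is real symmetric. In that case the realification of $K$ is block-diagonal with two copies of the real map, and its singular values are those of the real map, each doubled.

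I will try to prove $\Im(K^*K)=0$ directly from the structure of $K$. Its entries are
\[
K=\Diag(\conj(Bv_{\gnd}))\,A\,\Diag(v_{\gnd})-\Diag(\conj(s_{\gnd}))B,
\]
and it satisfies $K\one=0$. An equivalent check is that the radial perturbation $x_{\gnd}\odot(\one+b)$ and the tangential perturbation $x_{\gnd}\odot(\one+\i b)$ produce orthogonal residuals under the matched weighting $w_p=w_q$.

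If this orthogonality fails in general, I will fall back on two things. First, the Courant--Fischer inequality always gives $\rho\ge\sigma_{n-1}(C_0)$, because restricting to real inputs can only increase the minimum. Second, I will re-examine the limit in Step 1: it is possible that the magnitude columns, rather than vanishing in the limit, supply exactly the imaginary-direction component $Kb$, which would give equality without needing $K^*K$ to be real. This is the step where I expect the real difficulty to lie.
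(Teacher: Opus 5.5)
Your Steps 1 and 2 follow the paper's argument exactly. The paper also passes to the limit $w_u\to\infty$ to get $\rho=\min\{\|J_\theta\dot\theta\|:\dot\theta\in\R^n,\ \one^T\dot\theta=0,\ \|\dot\theta\|=1\}$. It also differentiates (\ref{eq:Q*Jx}) along $x_{\gnd}\odot\exp(\i t\dot\theta)$, where the $Q(x(t))^{*}$ term vanishes because $C_0x_{\gnd}=0$. It then finishes by noting that the diagonal factors are unitary and conjugation preserves singular values. In doing so it treats an identity valid only for real $\dot\theta$ as an identity of complex matrices. That is exactly the real-versus-complex issue you isolate in Step 3, and your suspicion is correct. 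The gap cannot be closed, because the equality is false in general. What survives is your fallback inequality $\sigma_{n-1}(C_0)\le\rho$. It follows from Courant--Fischer: the null vector $\one$ of $K$ is real, so the minimum of $\|Kz\|/\|z\|$ over complex $z\perp\one$ is at most the minimum over real $z\perp\one$. This inequality points the wrong way for the later substitution of $\rho$ for $\sigma_{n-1}(C_0)$ in the hypothesis and bound of Lemma~\ref{lem:si}.

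Here is a counterexample within Assumption~\ref{asm}. Take $n=3$, no shunts, and branches $1$--$2$ and $1$--$3$ with series admittances $y_{12}=1$ and $y_{13}=-\i$. Use the flat ground truth $v_{\gnd}=\one$ and observe only the injection at bus~1 ($d=e_1$). Then $s_{\gnd}=0$, and the only nonzero row of $C_0$ is the first row of $Y_{\bus}$, namely $g=(1-\i,\,-1,\,\i)$. Hence $\rank C_0=1$ and $\sigma_2(C_0)=0$. On the other hand, $\mathrm{d}p_1=\mathrm{d}\theta_1-\mathrm{d}\theta_3$ and $\mathrm{d}q_1=\mathrm{d}\theta_2-\mathrm{d}\theta_1$. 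So the Gram matrix of the limiting angle Jacobian is the Laplacian of the star centered at bus~1, with eigenvalues $0,1,3$, and $\rho=1$.

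The general reason is counting. $\rank_{\C}C_0$ is at most the number of observed complex measurements, so $\sigma_{n-1}(C_0)>0$ needs at least $n-1$ of them. By contrast, $\rho>0$ needs only $n-1$ observed real measurements. In this example $K=C_0$ and $\Im(\overline{g_2}\,g_3)=-1$, so $\Im(K^{*}K)\neq0$ and your plan to prove $\Im(K^{*}K)=0$ fails. The magnitude-column fallback cannot rescue it either, since your own Step 1 shows those columns drop out entirely in the limit. The consequence is concrete here: even at $\eta=0$, the bottom eigenspace of $C_0^{*}C_0$ is two-dimensional (it contains, e.g., $(0,\i,1)$). So spectral initialization is not guaranteed to recover $x_{\gnd}$.
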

We defer the proof to Section~\ref{subsec:proof_observ}. The same
quantity therefore has two interpretations. In the classical WLS theory,
$\rho>0$ guarantees local identifiability near the ground truth.
In the noiseless angle-estimation problem, the same condition guarantees
exact recovery by the global spectral relaxation, without requiring
an initial guess.

It remains to combine the previous three lemmas and express the accuracy
of the estimated angles in power-system interpretable quantities.
\begin{thm}[Spectral initialization]
\label{thm:main_si}Under Assumption~\ref{asm}, denote $\rho$
as the observability margin in (\ref{eq:observability_margin}), and
define the normalized noise level 
\begin{equation}
\eta\coloneqq\frac{K_{s}\|\epsilon_{s}\|_{\infty,\obs}+K_{u}\|\epsilon_{u}\|_{\infty}}{\rho}\label{eq:eta_def}
\end{equation}
where $\|\epsilon_{s}\|_{\infty,\obs}\coloneqq\max\{|(\epsilon_{s})_{i}|:d_{i}>0\}$
and
\begin{gather*}
K_{s}=K_{0}\|B\|_{\op},\qquad K_{u}=3K_{0}\|A\|_{\op}\|u_{\gnd}\|_{\infty},\\
K_{0}=4\sqrt{2}\pi\|d\|_{\infty}^{1/2}.
\end{gather*}
If $\eta\le\frac{4\sqrt{2}}{3}\pi$ and $\|\epsilon_{u}\|_{\infty}\le\|u_{\gnd}\|_{\infty}$,
then the spectral initialization $\theta_{\si}$ defined in (\ref{eq:si_def})
satisfies
\begin{equation}
\rmse(\theta_{\si},\theta_{\gnd})\le\eta\text{ radians}.\label{eq:si}
\end{equation}
\end{thm}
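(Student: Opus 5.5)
The plan is to combine Lemma~\ref{lem:si} with Lemma~\ref{lem:C0_observability}, which gives $\sigma_{n-1}(C_0)=\rho$. Everything then reduces to bounding $\|(C-C_0)x_{\gnd}\|$ and $\|C-C_0\|_{\op}$ in terms of the observed noise. Write $D=\Diag(d)$ and $\tilde u=u_{\gnd}+\epsilon_u$. Subtracting (\ref{eq:C0def}) from (\ref{eq:Cdef}) gives
\[
C-C_0 = D^{1/2}\bigl[\Diag(B\tilde u)A\Diag(\tilde u)-\Diag(Bu_{\gnd})A\Diag(u_{\gnd})\bigr]-D^{1/2}\Diag(\conj(\epsilon_s))B .
\]

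\textbf{Step 1: bound the perturbation term by term.}
\begin{itemize}
\item Noise in $\tilde s$. The term $D^{1/2}\Diag(\conj(\epsilon_s))$ only sees observed entries, because $d_i=0$ kills the unobserved ones. Its operator norm is therefore at most $\|d\|_\infty^{1/2}\|\epsilon_s\|_{\infty,\obs}$, and multiplying by $\|B\|_{\op}$ gives the $s$-contribution.
\item Noise in $\tilde u$. Telescope the bilinear difference as
\[
\Diag(B\epsilon_u)A\Diag(\tilde u)+\Diag(Bu_{\gnd})A\Diag(\epsilon_u).
\]
Since each row of $B$ has a single unit entry, $\|Bz\|_\infty\le\|z\|_\infty$. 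Combined with $\|\tilde u\|_\infty\le 2\|u_{\gnd}\|_\infty$ (from the hypothesis $\|\epsilon_u\|_\infty\le\|u_{\gnd}\|_\infty$), this gives an operator-norm bound $3\|A\|_{\op}\|u_{\gnd}\|_\infty\|\epsilon_u\|_\infty$. This is where the factor $3$ in $K_u$ comes from.
\end{itemize}
Together,
\[
\|C-C_0\|_{\op}\le \|d\|_\infty^{1/2}\bigl(\|B\|_{\op}\|\epsilon_s\|_{\infty,\obs}+3\|A\|_{\op}\|u_{\gnd}\|_\infty\|\epsilon_u\|_\infty\bigr)=\eta\rho/(4\sqrt2\pi).
\]
Since $\|x_{\gnd}\|=\sqrt n$, it follows that $\|(C-C_0)x_{\gnd}\|\le\sqrt n\,\|C-C_0\|_{\op}$.

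\textbf{Step 2: check the hypothesis of Lemma~\ref{lem:si}.} The threshold $\eta\le\frac{4\sqrt2}{3}\pi$ gives $\|C-C_0\|_{\op}\le\rho/3=\sigma_{n-1}(C_0)/3$, which is exactly (\ref{eq:main_mle_assump}).

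\textbf{Step 3: conclude.} We also need $\rho>0$. If $\rho=0$, then $\eta=\infty$ unless there is no noise, so I will simply assume $\rho>0$ as part of the theorem's meaning. Lemma~\ref{lem:si} then gives
\[
\dist(x_{\si},x_{\gnd})\le 8\sqrt2\sqrt n\,\eta/(4\sqrt2\pi)=2\sqrt n\,\eta/\pi .
\]
By (\ref{eq:rmse_dist}),
\[
\rmse\le\frac{\pi}{2\sqrt n}\dist\le\eta .
\]

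\textbf{Main obstacle.} The constants do not quite close: the chain in Step~3 produces $\eta$ only if $K_0$ absorbs exactly the factor $8\sqrt2\cdot\frac{\pi}{2}=4\sqrt2\pi$. That matches the given $K_0$, so the bookkeeping works, but I will verify it carefully. The remaining delicate point is the $\tilde u$ telescoping: I must make sure the $\Diag(B\epsilon_u)$ factor picks up only $\|\epsilon_u\|_\infty$ and not a $\sqrt m$ factor. This holds because a diagonal matrix has operator norm equal to its max entry.
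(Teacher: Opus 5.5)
Your proposal is correct and follows the paper's proof essentially step for step. It uses the same split $C-C_0=E_s+E_u$ with the telescoped voltage-magnitude term, the bound $\|(C-C_0)x_{\gnd}\|\le\sqrt{n}\,\|C-C_0\|_{\op}$, a check of (\ref{eq:main_mle_assump}) from the threshold on $\eta$, and the conversion via (\ref{eq:rmse_dist}), with the constants closing as you computed. The only step you leave implicit is the nullspace identity $C_0x_{\gnd}=0$ from (\ref{eq:C0_null}), which Lemma~\ref{lem:si} requires; your explicit assumption $\rho>0$ just states what the paper takes for granted.
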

\begin{IEEEproof}
Write $E\coloneqq C-C_{0}$. First, substituting (\ref{eq:C0_null})
and Lemma~\ref{lem:C0_observability} into (\ref{eq:main_mle}) yields
\begin{equation}
\frac{1}{\sqrt{n}}\dist(x_{\si},x_{\gnd})\le8\sqrt{2}\,\frac{\|Ex_{\gnd}\|}{\rho\sqrt{n}}\le8\sqrt{2}\,\frac{\|E\|_{\op}}{\rho}\label{eq:si_rms}
\end{equation}
since $\|Ex_{\gnd}\|\le\|E\|_{\op}\|x_{\gnd}\|$ and $\|x_{\gnd}\|=\sqrt{n}.$
Next, we write $E=E_{s}+E_{u}$ explicitly where \begin{subequations}\label{eq:Edef}
\begin{align}
E_{s} & \coloneqq-D^{1/2}\Diag(\conj(\epsilon_{s}))B,\\
E_{u} & \coloneqq D^{1/2}[\Diag(B\epsilon_{u})A\Diag(\tilde{u})\nonumber \\
 & \qquad\qquad+\Diag(Bu_{\gnd})A\Diag(\epsilon_{u})].
\end{align}
\end{subequations}Substituting $\|E\|_{\op}\le\|E_{s}\|_{\op}+\|E_{u}\|_{\op}$
into (\ref{eq:si_rms}) and (\ref{eq:main_mle_assump}), and translating
phasor distance to angular error using (\ref{eq:rmse_dist}) completes
the proof. 
\end{IEEEproof}
Hence, for sufficiently small normalized noise level $\eta$, spectral
initialization recovers the ground truth to $\eta$ accuracy. In particular,
if $\eta=0$, then it recovers the ground truth in one shot. Otherwise,
for larger values of $\eta$, spectral initialization also provides
a reliable starting point for iterative refinement to global optimality. 
\begin{thm}[Weighted least-squares]
\label{thm:wls}Under the same conditions as Theorem~\ref{thm:main_si},
the globally optimal solution $\theta_{\wls}$ in (\ref{eq:P}) satisfies
\begin{align}
\rmse(\theta_{\wls},\theta_{\gnd}) & \le\frac{1}{2}\eta\text{ radians},\label{eq:mle_gnd}\\
\rmse(\theta_{\si},\theta_{\wls}) & \le\frac{3}{2}\eta\text{ radians}.\label{eq:si_mle}
\end{align}
\end{thm}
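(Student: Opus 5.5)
We will prove (\ref{eq:mle_gnd}) directly and then obtain (\ref{eq:si_mle}) from the triangle inequality. Since $\dist$ is a pseudometric invariant under global phase and $\rmse$ is comparable to it by (\ref{eq:rmse_dist}), it suffices to work with phasors. The first step is to show that $\dist(x_{\wls},x_{\gnd})$ is controlled by $\|Ex_{\gnd}\|/\rho$ with $E=C-C_0$. The lower bound in (\ref{eq:rmse_dist}) is too lossy to combine the estimates at the angle level, so instead we plan to work throughout with $\dist$ and convert to $\rmse$ only at the end.

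\textbf{WLS versus ground truth.} By global optimality of $x_{\wls}$ in (\ref{eq:P}), $\|Cx_{\wls}\|\le\|Cx_{\gnd}\|=\|Ex_{\gnd}\|$, using (\ref{eq:C0_null}). Decompose $x_{\wls}=\alpha x_{\gnd}/\sqrt n+r$ with $r\perp x_{\gnd}$. Because the nullspace of $C_0$ is spanned by $x_{\gnd}$ and $\sigma_{n-1}(C_0)=\rho$ (Lemma~\ref{lem:C0_observability}), we have $\|C_0x_{\wls}\|=\|C_0r\|\ge\rho\|r\|$. On the other hand,
\[
\|C_0x_{\wls}\|\le\|Cx_{\wls}\|+\|Ex_{\wls}\|\le\|Ex_{\gnd}\|+\|Ex_{\wls}\|.
\]
Two facts are needed to turn this into a bound on $\dist$. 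First, $\dist(x,x_{\gnd})\le\sqrt2\|r\|$ for unit-modulus $x$ with $\|x\|=\sqrt n$; this is the standard fact that the distance to a line controls phase-aligned distance. Second, $\|Ex_{\wls}\|$ must be bounded. Note that $\|Ex\|$ for arbitrary $x\in\C_1^n$ is not small relative to $\|Ex_{\gnd}\|$ in general. We will use the entrywise structure of $E_s,E_u$ in (\ref{eq:Edef}): since $B$ selects single entries and $|x_i|=1$, we get $\|E_sx\|\le\|d\|_\infty^{1/2}\|\epsilon_s\|_{\infty,\obs}\|B\|_{\op}\sqrt n$, and similarly for $E_u$. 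Hence both $\|Ex_{\gnd}\|$ and $\|Ex_{\wls}\|$ are at most $\sqrt n$ times the numerator of $\eta$ divided by $K_0$.

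Combining these gives $\frac1{\sqrt n}\dist(x_{\wls},x_{\gnd})\le 2\sqrt2\cdot 2\,(K_s\|\epsilon_s\|_{\infty,\obs}+K_u'\|\epsilon_u\|_\infty)/(K_0\rho)$, where $K_u'$ is the constant produced by the $E_u$ bound. The hypothesis $\|\epsilon_u\|_\infty\le\|u_{\gnd}\|_\infty$ gives $\|\tilde u\|_\infty\le2\|u_{\gnd}\|_\infty$, which accounts for the factor $3$ in $K_u$. Multiplying by $\pi/2$ from (\ref{eq:rmse_dist}), the constant is $4\sqrt2\cdot\frac{\pi}{2}=2\sqrt2\pi$, which is $K_0/(2\|d\|_\infty^{1/2})$. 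This yields exactly $\frac12\eta$ and proves (\ref{eq:mle_gnd}).

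\textbf{Spectral initialization versus WLS.} The proof of Theorem~\ref{thm:main_si} gives $\frac{\pi}{2\sqrt n}\dist(x_{\si},x_{\gnd})\le\eta$, and the step above gives $\frac{\pi}{2\sqrt n}\dist(x_{\wls},x_{\gnd})\le\frac12\eta$. The triangle inequality for $\dist$, with global phases composing, then gives $\frac{\pi}{2\sqrt n}\dist(x_{\si},x_{\wls})\le\frac32\eta$. The upper bound in (\ref{eq:rmse_dist}) yields (\ref{eq:si_mle}).

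\textbf{Main obstacle.} The hardest step is the constant bookkeeping in the first part: checking that the $r$-decomposition bound with factor $\sqrt2$, together with the two $\|E\cdot\|$ terms, lands exactly at $\frac12\eta$. If the constants come out slightly off, the fallback is to bound $\|C_0x_{\wls}\|\le 2\max_{x\in\C_1^n}\|Ex\|$ and absorb the discrepancy, exploiting the slack already present in $K_0$ (it was calibrated with the factor $8\sqrt2$ of Lemma~\ref{lem:si}, versus roughly $4\sqrt2$ here).
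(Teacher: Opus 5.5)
Your proof is correct, and for (\ref{eq:mle_gnd}) it takes a more direct route than the paper.

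\textbf{The paper's route.} The paper derives $\dist(x_{\wls},x_{\gnd})\le4\sqrt2\,\|Ex_{\gnd}\|/\rho$ (Lemma~\ref{lem:wls}) from Lemma~\ref{lem:dist_xz}. It expands $0\le\|Cx_{\gnd}\|^2-\|Cx_{\wls}\|^2$ along the split $x=\alpha x_{\gnd}+r$. It then uses $\|Cr\|\ge\mu\|r\|$ with $\mu=\rho-\|E\|_{\op}$ and solves the resulting quadratic inequality in $\|Cr\|$. This needs the margin $\|E\|_{\op}\le\rho/3$, and only afterwards is $\|Ex_{\gnd}\|\le\sqrt n\|E\|_{\op}$ used to pass to $\eta$.

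\textbf{Your route.} You bound $\rho\|r\|\le\|C_0x_{\wls}\|\le\|Ex_{\gnd}\|+\|Ex_{\wls}\|\le2\sqrt n\|E\|_{\op}$ by the triangle inequality. This needs no smallness hypothesis; only $\|\epsilon_u\|_\infty\le\|u_{\gnd}\|_\infty$ is used, to obtain $K_u$. It also gives a sharper constant. Since $\|E\|_{\op}\le\|d\|_\infty^{1/2}(K_s\|\epsilon_s\|_{\infty,\obs}+K_u\|\epsilon_u\|_\infty)/K_0=\eta\rho/(4\sqrt2\pi)$, you get $\dist\le\sqrt2\|r\|\le2\sqrt{2n}\,\|E\|_{\op}/\rho$, and hence $\rmse(\theta_{\wls},\theta_{\gnd})\le\eta/4$.

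\textbf{Bookkeeping slips.} Your written constants are off in two ways, neither of which breaks the result. First, the factor in your combined bound should be $\sqrt2\cdot2$, not $2\sqrt2\cdot2$; this overestimate is harmless. Second, your sentence bounding $\|Ex\|$ by ``$\sqrt n$ times the numerator of $\eta$ divided by $K_0$'' drops the factor $\|d\|_\infty^{1/2}$. That factor must be carried for the final cancellation against $K_0$ to work, and your last step implicitly restores it. With both fixed, $\frac12\eta$ holds with room to spare, and your fallback is unnecessary.

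\textbf{What each route buys.} The paper's route yields a lemma-level bound that depends on the noise only through $\|Ex_{\gnd}\|$. That first-order quantity can be far smaller than $\sqrt n\|E\|_{\op}$ for non-adversarial noise. Your estimate $\|Ex_{\wls}\|\le\sqrt n\|E\|_{\op}$ instead pays the worst case over $\C_1^n$. For Theorem~\ref{thm:wls} itself, where everything is reduced to $\|E\|_{\op}$ anyway, nothing is lost.

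\textbf{Second inequality.} Your derivation of (\ref{eq:si_mle}) matches the paper's. It combines the triangle inequality for $\dist$ with the bound $\frac{\pi}{2\sqrt n}\dist(x_{\si},x_{\gnd})\le\eta$ from the proof of Theorem~\ref{thm:main_si}. That is the only place where the hypothesis $\eta\le\frac{4\sqrt2}{3}\pi$ is actually needed.
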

We defer the proof to Section~\ref{subsec:specinit}. The error bound
(\ref{eq:si_mle}) places the spectral initializer near the global
minimizer, while the error bound (\ref{eq:mle_gnd}) suggests a potential
improvement in accuracy by further refinement. 

\section{Spectral certification}\label{sec:cert}

Once local refinement returns a candidate angle estimate, the remaining
question is how to verify, after refinement, that the recovered point
is indeed the global solution of (\ref{eq:P}). The phase synchronization
formulation provides a computable \emph{a posteriori} certificate
for this purpose. For $x\in\C_{1}^{n}$, define 
\begin{align}
S(x) & \coloneqq C^{*}C-\Re\!\left[\ddiag(C^{*}Cxx^{*})\right].\label{eq:S_def}
\end{align}
The following is the standard dual certificate for phase synchronization.
\begin{prop}[{\cite[Lem.~2]{boumal2016nonconvex}}]
\label{prop:cert} Given $C\in\C^{m\times n}$, define the objective
$f(x)\coloneqq\|Cx\|^{2}$ and let $x_{\wls}\coloneqq\arg\min_{x\in\C_{1}^{n}}f(x)$
be globally optimal. For any $x\in\C_{1}^{n}$, the optimality gap
at $x$ is bounded as
\begin{equation}
0\;\le\;f(x)-f(x_{\wls})\;\le\;\delta(x)\coloneqq-n\lambda_{\min}(S(x)),\label{eq:delta_def}
\end{equation}
In particular, if $\delta(x)=0$, then $x$ is global optimal. 
\end{prop}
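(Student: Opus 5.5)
The plan is to exhibit $S(x)$ as the Lagrangian Hessian for (\ref{eq:P}) and derive the bound from weak duality of the Shor semidefinite relaxation. First write $\Cone=\{x:|x_i|^2=1\}$ and $f(x)=x^*C^*Cx$. For any real multiplier vector $\mu\in\R^n$, every $y\in\Cone$ satisfies
\[
f(y)=y^*(C^*C-\Diag(\mu))y+\one^{T}\mu\ge n\lambda_{\min}(C^*C-\Diag(\mu))+\one^{T}\mu,
\]
using $y^*\Diag(\mu)y=\sum_i\mu_i|y_i|^2=\one^{T}\mu$ and $\|y\|^2=n$. Taking $y=x_{\wls}$ gives
\[
f(x_{\wls})\ge n\lambda_{\min}(C^*C-\Diag(\mu))+\one^{T}\mu
\qquad\text{for every }\mu\in\R^n.
\]

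Next choose $\mu$ from the first-order (Riemannian gradient) condition at the candidate $x$:
\[
\mu\coloneqq\Re\,\diag(C^*Cxx^*),\qquad \mu_i=\Re\{(C^*Cx)_i\conj(x_i)\}.
\]
Then $\Diag(\mu)=\Re[\ddiag(C^*Cxx^*)]$ (the real part acts entrywise on a diagonal matrix), so $C^*C-\Diag(\mu)=S(x)$ exactly as in (\ref{eq:S_def}). Moreover,
\[
\one^{T}\mu=\Re\sum_i(C^*Cx)_i\conj(x_i)=\Re\, x^*C^*Cx=f(x).
\]
Substituting into the weak-duality bound yields
\[
f(x_{\wls})\ge n\lambda_{\min}(S(x))+f(x),
\qquad\text{i.e.}\qquad
f(x)-f(x_{\wls})\le -n\lambda_{\min}(S(x))=\delta(x).
\]
The lower bound $0\le f(x)-f(x_{\wls})$ is immediate from the global optimality of $x_{\wls}$ over $\Cone$. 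If $\delta(x)=0$, the two bounds force $f(x)=f(x_{\wls})$, so $x$ is globally optimal.

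One remark supports reading $\delta(x)\ge0$ as a gap bound: $x^*S(x)x=f(x)-\one^{T}\mu=0$ with $\|x\|^2=n$, so $\lambda_{\min}(S(x))\le0$. This fact is not needed for the inequality itself.

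The main obstacle is bookkeeping rather than depth. One must check that $S(x)$ is Hermitian, so that $\lambda_{\min}$ is real. This holds because $\Re[\ddiag(\cdot)]$ is a real diagonal matrix and $C^*C$ is Hermitian. One must also check that the identity $y^*\Diag(\mu)y=\one^{T}\mu$ uses the unit-modulus constraint in the right place. I would state these two facts explicitly and otherwise cite the argument of~\cite[Lem.~2]{boumal2016nonconvex}.
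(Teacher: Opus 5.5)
Your proof is correct and is essentially the argument the paper relies on. Choosing $\mu=\Re\,\diag(C^*Cxx^*)$ and evaluating at $y=x_{\wls}$ is exactly the identity $x_{\wls}^*S(x)x_{\wls}=f(x_{\wls})-f(x)$ from Lemma~\ref{lem:dual_gap} with $z=x$, combined with the Rayleigh bound $x_{\wls}^*S(x)x_{\wls}\ge n\lambda_{\min}(S(x))$; your weak-duality framing repackages the same computation as in \cite[Lem.~2]{boumal2016nonconvex}.
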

After any algorithm returns an angle estimate $x$, one can compute
$\lambda_{\min}(S(x))$ and certify an objective gap. If $\delta(x)\approx0$,
then $x$ is near globally optimal for (\ref{eq:P}). 

Nevertheless, Proposition~\ref{prop:cert} has two critical limitations.
First, the certificate is conservative. A small value of $\delta(x)\approx0$
certifies near-global optimality $f(x)\approx f(x_{\wls})$, but a
large value indicates only a failure to certify. In particular, failure
may occur even with a globally optimal $x=x_{\wls}$. Second, certified
global optimality does not imply recovery. If the measurements are
uninformative or the noise is too large relative to observability,
then a certifiably globally optimal $x=x_{\wls}$ may still be far
from $x_{\gnd}$. 

For the certificate to be useful as a post hoc verification tool,
it should be exact, in that it should certify whenever it is possible
to do so, and it should directly certify recovery, meaning that $\delta(x)\approx0$
should indicate that $\dist(x,x_{\gnd})\approx0$. The experiments
in~\cite{guzel2024power} suggested that both properties hold below
an apparent noise threshold. Our theory below explains this behavior.
\begin{lem}
\label{lem:main_sdp}Let $C_{0}\in\C^{m\times n}$ satisfy $C_{0}x_{\gnd}=0$
for some $x_{\gnd}\in\C^{n}$. Under the same conditions as Proposition~\ref{prop:cert},
if 
\[
\opnorm{C-C_{0}}\le\frac{\sigma_{n-1}(C_{0})}{2},\;\norm{(C-C_{0})x_{\gnd}}\le\frac{\sigma_{n-1}(C_{0})}{4(\kappa_{\infty}+1)},
\]
where $\kappa_{\infty}\coloneqq\frac{\max_{i}\norm{C_{0}e_{i}}}{\sigma_{n-1}(C_{0})}$,
then $x_{\wls}$ is the unique global solution (up to absolute phase),
and the function $\delta$ in (\ref{eq:delta_def}) satisfies $\delta(x)=0$
if and only if $\dist(x,x_{\wls})=0$. 
\end{lem}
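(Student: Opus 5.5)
The plan is to show that the dual matrix $S(x_{\wls})$ from (\ref{eq:S_def}) is positive semidefinite with a one-dimensional kernel spanned by $x_{\wls}$. Both conclusions of the lemma then follow by standard arguments.

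\emph{The easy implications.} Write $M\coloneqq C^{*}C$ and $\Lambda\coloneqq\Re[\ddiag(Mx_{\wls}x_{\wls}^{*})]$, so $S(x_{\wls})=M-\Lambda$. Suppose we know $S(x_{\wls})\succeq0$ with $\ker S(x_{\wls})=\operatorname{span}\{x_{\wls}\}$.
\begin{itemize}
\item Then $\delta(x_{\wls})=0$.
\item If $\delta(x)=0$ for some $x\in\C_{1}^{n}$, Proposition~\ref{prop:cert} gives $f(x)=f(x_{\wls})$, so $x$ is also a global minimizer. It therefore suffices to prove uniqueness of the global minimizer up to phase.
\item For uniqueness, let $x'$ be any other global minimizer. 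Since $\diag(\Lambda)=\Re[\operatorname{conj}(x_{\wls})\odot Mx_{\wls}]$ and $|x_{\wls,i}|=1$, we get $\tr(\Lambda)=x_{\wls}^{*}Mx_{\wls}=f(x_{\wls})$. Hence
\[
x'^{*}S(x_{\wls})x'=f(x')-\textstyle\sum_{i}\Lambda_{ii}|x'_{i}|^{2}=f(x')-\tr(\Lambda)=f(x')-f(x_{\wls})=0,
\]
which forces $x'\in\ker S(x_{\wls})=\operatorname{span}\{x_{\wls}\}$. So $x'$ equals $x_{\wls}$ up to a global phase, i.e.\ $\dist(x',x_{\wls})=0$.
\end{itemize}
The same identity shows that $\delta(x)=0$ whenever $\dist(x,x_{\wls})=0$, by phase invariance of $S$.

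\emph{Kernel and positivity.} The first-order optimality condition on the torus gives $(Mx_{\wls})_{i}=\lambda_{i}x_{\wls,i}$ with $\lambda_{i}$ real and $\lambda_{i}=\Lambda_{ii}$. Hence $S(x_{\wls})x_{\wls}=0$. It remains to show $y^{*}S(x_{\wls})y>0$ for every nonzero $y\perp x_{\wls}$.

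First I would bound $\Lambda$. We have $|\Lambda_{ii}|\le\|Ce_{i}\|\,\|Cx_{\wls}\|$. Write $E=C-C_{0}$ and $\sigma=\sigma_{n-1}(C_{0})$. Then:
\begin{itemize}
\item $\|Ce_{i}\|\le\|C_{0}e_{i}\|+\|E\|_{\op}\le(\kappa_{\infty}+\tfrac{1}{2})\sigma$.
\item By global optimality (normalizing $x_{\gnd}\in\C_{1}^{n}$, as in the application), $\|Cx_{\wls}\|\le\|Cx_{\gnd}\|=\|Ex_{\gnd}\|\le\sigma/(4(\kappa_{\infty}+1))$.
\end{itemize}
Together these give $\|\Lambda\|_{\op}\le\sigma^{2}/4\cdot\frac{\kappa_{\infty}+1/2}{\kappa_{\infty}+1}<\sigma^{2}/4$, with strict inequality.

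Second I would lower bound $\|Cy\|^{2}$ for $y\perp x_{\wls}$. Let $\theta$ be the angle between $x_{\wls}$ and $x_{\gnd}$, and decompose $y=\beta\hat{x}_{\gnd}+y_{\perp}$ with $\hat{x}_{\gnd}=x_{\gnd}/\sqrt{n}$ and $y_{\perp}\perp x_{\gnd}$. Then:
\begin{itemize}
\item $\|C_{0}y\|\ge\sigma\|y_{\perp}\|$.
\item Because $y\perp x_{\wls}$, $|\beta|\le\sin\theta\,\|y\|$, so $\|y_{\perp}\|\ge\cos\theta\,\|y\|$.
\item $\sin\theta$ is controlled by the argument behind Theorem~\ref{thm:wls}: $\|C_{0}x_{\wls}\|\le\|Cx_{\wls}\|+\|E\|_{\op}\sqrt{n}$ is not sharp enough. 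Instead I would use $\|C x_{\wls}\|\le\|Ex_{\gnd}\|$ together with $\sigma_{n-1}(C)\ge\sigma/2$ (Weyl) to place $x_{\wls}$ near the bottom singular vector of $C$. Then I would use a Davis--Kahan/Wedin step, as in Lemma~\ref{lem:si}, to place that vector near $x_{\gnd}$, with error $O(\|Ex_{\gnd}\|/\sigma)$.
\end{itemize}
From this I aim to get $\|Cy\|\ge(\sigma\cos\theta-\|E\|_{\op}\ldots)\|y\|$, i.e.\ $\|Cy\|^{2}\ge(\sigma/2)^{2}\|y\|^{2}$ up to the small correction in $\theta$. Combined with $\|\Lambda\|_{\op}<\sigma^{2}/4$, this gives $y^{*}Sy>0$.

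\emph{Main obstacle.} The hard step is the last one: making the constants close so that $\|Cy\|^{2}$ on $x_{\wls}^{\perp}$ strictly beats $\|\Lambda\|_{\op}$. The bound $\sigma_{n-1}(C)\ge\sigma/2$ alone gives exactly $\sigma^{2}/4$, which matches rather than beats the bound on $\Lambda$, and it applies to the wrong complement (that of $C$'s bottom singular vector, not of $x_{\wls}$). If the $\cos\theta$ loss is too large, I would fall back on a sharper route. I would write $y^{*}My=\|C_{0}y+Ey\|^{2}$ and expand. The small factor $\|Cx_{\wls}\|$ in $\Lambda$ then has to absorb the cross terms, using the slack $(\kappa_{\infty}+\tfrac{1}{2})/(\kappa_{\infty}+1)<1$, which is presumably why the hypothesis carries the factor $\kappa_{\infty}+1$.
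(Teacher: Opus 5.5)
\paragraph{Verdict: the structure matches the paper, but the decisive step is missing.}
Your outer structure is the paper's. The following all appear there in the same form:
\begin{itemize}
\item the identity $x^{*}S(z)x=\norm{Cx}^{2}-\norm{Cz}^{2}$ on $\C_{1}^{n}$ (Lemma~\ref{lem:dual_gap});
\item the reduction of both conclusions to ``$S(x_{\wls})\succeq0$ with kernel $\mathrm{span}\{x_{\wls}\}$'';
\item the first-order condition $S(x_{\wls})x_{\wls}=0$;
\item the bound $\opnorm{\Lambda}\le\norm{C^{*}}_{2\to\infty}\norm{Ex_{\gnd}}\le\frac{\sigma^{2}}{4}\cdot\frac{\kappa_{\infty}+1/2}{\kappa_{\infty}+1}$.
\end{itemize}
The gap is the decisive step: positivity of $S(x_{\wls})$ on $x_{\wls}^{\perp}$, which you leave unresolved. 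Your route through the angle $\theta$ between $x_{\wls}$ and $x_{\gnd}$ necessarily yields something strictly below $\sigma^{2}/4$, for example $\norm{Cy}\ge\sigma(\cos\theta-\tfrac12)\norm{y}$. To beat $\opnorm{\Lambda}$ you would then need $1-\cos\theta$ below roughly $1/(8(\kappa_{\infty}+1))$. That requires explicit constants in $n$ and $\kappa_{\infty}$ that you have not tracked and that the stated hypotheses do not obviously deliver. Your ``fallback'' expansion is only a hope.

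\paragraph{The missing idea.}
You never need to bound $y^{*}My$ on the particular subspace $x_{\wls}^{\perp}$. Apply Weyl's inequality directly to the Hermitian matrix $S(x_{\wls})=M-\Lambda$. Its second-smallest eigenvalue satisfies
\[
\lambda_{n-1}(S(x_{\wls}))\ge\lambda_{n-1}(M)-\opnorm{\Lambda}\ge\frac{\sigma^{2}}{4}-\frac{\sigma^{2}}{4}\cdot\frac{\kappa_{\infty}+\frac{1}{2}}{\kappa_{\infty}+1}=\frac{\sigma^{2}}{8(\kappa_{\infty}+1)}>0,
\]
using $\lambda_{n-1}(M)=\sigma_{n-1}(C)^{2}\ge(\sigma-\opnorm{E})^{2}\ge\sigma^{2}/4$. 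This is the paper's Lemma~\ref{lem:strict_comp}. The conclusion then follows in three steps:
\begin{itemize}
\item $S(x_{\wls})$ has at most one nonpositive eigenvalue.
\item Since $S(x_{\wls})x_{\wls}=0$, that eigenvalue is $0$, with eigenvector $x_{\wls}$.
\item The remaining eigenvectors span $x_{\wls}^{\perp}$, so $y^{*}S(x_{\wls})y\ge\lambda_{n-1}(S(x_{\wls}))\norm{y}^{2}>0$ there.
\end{itemize}
The ``wrong complement'' worry disappears because the min--max principle bounds the second eigenvalue no matter which vector turns out to be the bottom eigenvector. No estimate of the angle between $x_{\wls}$ and $x_{\gnd}$ is needed.

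Your remark that $\sigma^{2}/4$ only ``matches'' $\opnorm{\Lambda}$ also contradicts your own strict bound. The slack $\sigma^{2}/(8(\kappa_{\infty}+1))$ is exactly what makes this one-line argument work.
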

Translating this result into power-system interpretable quantities
by repeating the proof of Theorem~\ref{thm:main_si} yields the following.
\begin{thm}[Spectral certification]
\label{thm:cert}Under the same conditions as Theorem~\ref{thm:main_si},
if the normalized noise level satisfies
\[
\eta\le\frac{1}{4\sqrt{n}(\kappa_{\infty}+1)},
\]
then the solution $\theta_{\wls}$ in (\ref{eq:P}) is unique, and
$\delta(\exp(\i\theta))=0$ holds if and only if $\rmse(\theta,\theta_{\wls})=0$. 
\end{thm}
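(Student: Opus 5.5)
The plan is to derive Theorem~\ref{thm:cert} directly from Lemma~\ref{lem:main_sdp}. Write $E\coloneqq C-C_{0}$, as in the proof of Theorem~\ref{thm:main_si}. By Lemma~\ref{lem:C0_observability} we have $\sigma_{n-1}(C_{0})=\rho$. It therefore suffices to check the two hypotheses
\[
\opnorm{E}\le\tfrac{\rho}{2},\qquad\norm{Ex_{\gnd}}\le\frac{\rho}{4(\kappa_{\infty}+1)} .
\]
We will then translate the conclusion $\dist(x,x_{\wls})=0$ into $\rmse(\theta,\theta_{\wls})=0$ using the equivalence (\ref{eq:rmse_dist}).

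The central intermediate bound is the operator-norm estimate already contained in the proof of Theorem~\ref{thm:main_si}. Use the split $E=E_{s}+E_{u}$ from (\ref{eq:Edef}).

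For $E_{s}$: the factor $D^{1/2}$ annihilates the unobserved rows, so
\[
\opnorm{E_{s}}\le\|d\|_{\infty}^{1/2}\,\|\epsilon_{s}\|_{\infty,\obs}\,\|B\|_{\op}.
\]

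For $E_{u}$: each row of $B$ has exactly one unit entry, so $\|B\epsilon_{u}\|_{\infty}\le\|\epsilon_{u}\|_{\infty}$ and $\|Bu_{\gnd}\|_{\infty}\le\|u_{\gnd}\|_{\infty}$. The assumption $\|\epsilon_{u}\|_{\infty}\le\|u_{\gnd}\|_{\infty}$ gives $\|\tilde{u}\|_{\infty}\le2\|u_{\gnd}\|_{\infty}$. Hence
\[
\opnorm{E_{u}}\le3\|d\|_{\infty}^{1/2}\|A\|_{\op}\|u_{\gnd}\|_{\infty}\|\epsilon_{u}\|_{\infty}.
\]

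Adding the two bounds yields $\opnorm{E}\le\eta\rho/K'$ with $K'\coloneqq K_{0}/\|d\|_{\infty}^{1/2}=4\sqrt{2}\pi$.

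With this bound, the second hypothesis follows from
\[
\norm{Ex_{\gnd}}\le\sqrt{n}\,\opnorm{E}\le\frac{\sqrt{n}\,\eta\rho}{4\sqrt{2}\pi}\le\frac{\rho}{4(\kappa_{\infty}+1)\cdot4\sqrt{2}\pi},
\]
where the last step uses $\eta\le\frac{1}{4\sqrt{n}(\kappa_{\infty}+1)}$. This is stronger than what Lemma~\ref{lem:main_sdp} requires.

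For the first hypothesis, $\kappa_{\infty}\ge0$ and $n\ge1$ give $\eta\le1/4$, so $\opnorm{E}\le\rho/(16\sqrt{2}\pi)\le\rho/2$.

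The remaining hypothesis of Theorem~\ref{thm:main_si}, namely $\eta\le\frac{4\sqrt{2}}{3}\pi$, is implied by $\eta\le1/4$. The standing assumption $\|\epsilon_{u}\|_{\infty}\le\|u_{\gnd}\|_{\infty}$ is inherited from that theorem. Lemma~\ref{lem:main_sdp} then gives uniqueness of $x_{\wls}$ up to a global phase, and $\delta(x)=0\iff\dist(x,x_{\wls})=0$.

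To conclude, set $x=\exp(\i\theta)$ and $x_{\wls}=\exp(\i\theta_{\wls})$. By (\ref{eq:rmse_dist}), $\dist(x,x_{\wls})=0$ holds if and only if $\rmse(\theta,\theta_{\wls})=0$. Uniqueness of $\theta_{\wls}$ is understood modulo $2\pi k$ and a constant shift, consistent with (\ref{eq:rmsedef}).

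The step requiring most care is the bound on $\opnorm{E_{u}}$. It needs the bound on $\tilde{u}$ in $\Diag(\tilde{u})$ and the row structure of $B$, which together produce the constant $3$ in $K_{u}$. Everything else is arithmetic with generous slack.
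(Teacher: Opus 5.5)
Your proposal is correct and follows essentially the same route as the paper. The paper obtains Theorem~\ref{thm:cert} by applying Lemma~\ref{lem:main_sdp} with $\sigma_{n-1}(C_0)=\rho$ from Lemma~\ref{lem:C0_observability}, reusing the bookkeeping of Theorem~\ref{thm:main_si} ($\opnorm{E}\le\eta\rho/(4\sqrt{2}\pi)$ and $\norm{Ex_{\gnd}}\le\sqrt{n}\,\opnorm{E}$), and converting $\dist$ to $\rmse$ via (\ref{eq:rmse_dist}); you simply write out the $E_s$ and $E_u$ operator-norm estimates that the paper leaves implicit.
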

If the normalized noise level $\eta$ is low, then spectral certification
is exact, the WLS estimate is unique up to phase, and Theorem~\ref{thm:wls}
shows that this estimate is close to the true phase angles. In particular,
if $\eta=0$, then spectral initialization exactly recovers the ground
truth $\theta_{\gnd}$, and the spectral certification in turn certifies
recovery of the ground truth. The actual threshold in Theorem~\ref{thm:cert}
is quite conservative, however; in our experiments below, we find
that certification remains reliable at noise levels far beyond what
this sufficient condition predicts.

\section{Experiments}\label{sec:Experiments}

\global\long\def\emp{\mathrm{emp}}%
\global\long\def\cert{\mathrm{cert}}%
\global\long\def\thresh{\mathrm{thresh}}%
\global\long\def\xsf{\mathsf{x}}%
\global\long\def\ysf{\mathsf{y}}%

\begin{figure}[t]
\subfloat[]{\centering\includegraphics[width=1\columnwidth]{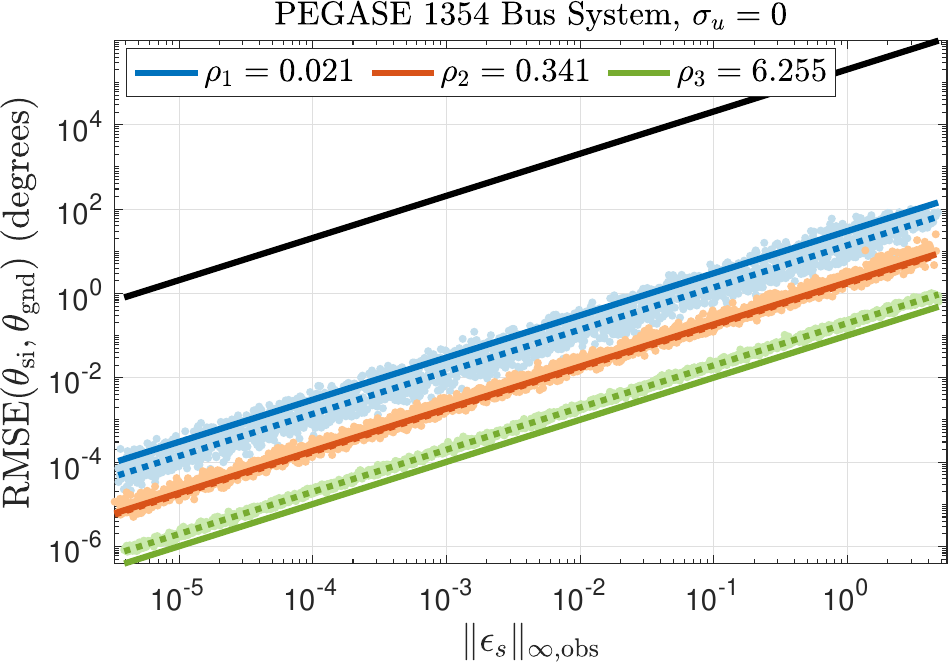}

}

\subfloat[]{\centering\includegraphics[width=1\columnwidth]{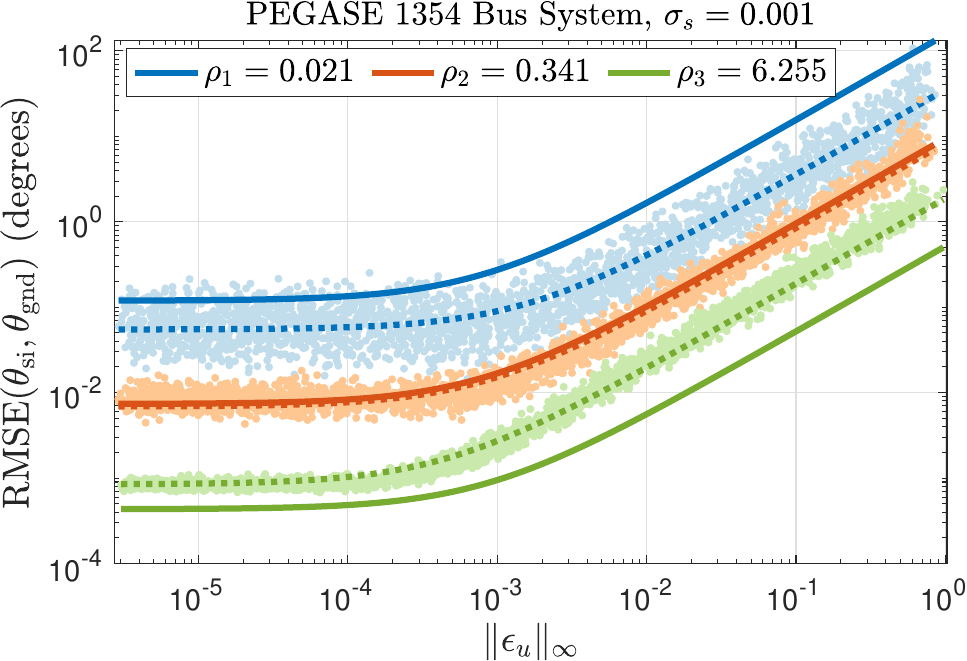}

}\caption{Spectral initialization error as a function of: (a) $\|\epsilon_{s}\|_{\infty,\protect\obs}$
with $\epsilon_{u}=0$; (b) $\|\epsilon_{u}\|_{\infty}$ with fixed
$\|\epsilon_{s}\|_{\infty,\protect\obs}>0$. Each color corresponds
to a different observability margin $\rho$. The solid color lines
show fitted curves for the standard model (\ref{eq:tilde_eta_def}),
while the dashed color lines show fitted curves for the counterfactual
model (\ref{eq:eq:tilde_eta_def2}). Markers show the measured RMSE
values. The solid black line shows the upper bound from Theorem~\ref{thm:main_si}.}
\label{fig:eta}
\end{figure}
\begin{table*}
\caption{Theoretical and fitted constants for the spectral-initialization error
model up to 4 significant digits. The $R^{2}$ values are computed
on the log scale over the low-error regime $\protect\ysf_{j}\le3^{\circ}$.}\label{tab:KsKu}

\centering%
\begin{tabular}{|c|cc|ccc|ccccc|}
\hline 
\multirow{2}{*}{Case} & \multicolumn{2}{c|}{Theoretical Constants} & \multicolumn{3}{c|}{Standard Model (\ref{eq:tilde_eta_def})} & \multicolumn{5}{c|}{Counterfactual Model (\ref{eq:eq:tilde_eta_def2})}\tabularnewline
\cline{2-11}
 & $K_{s}$ & $K_{u}$ & $\tilde{K}_{s}$ & $\tilde{K_{u}}$ & $R^{2}$ & $\tilde{K}_{s}$ & $\tilde{K_{u}}$ & $\beta$ & $\xi$ & $R^{2}$\tabularnewline
\hline 
case1354pegase & 7.540E+01 & 1.523E+06 & 1.103E-02 & 5.646E-02 & 0.9337 & 1.354E-02 & 0.08432 & 0.7464 & 0.5140 & 0.9841\tabularnewline
case1888rte & 7.540E+01 & 4.545E+06 & 1.107E-02 & 2.604E-02 & 0.9387 & 1.950E-02 & 5.014E-02 & 0.5959 & 0.5524 & 0.9844\tabularnewline
case1951rte & 7.540E+01 & 4.633E+06 & 1.095E-02 & 2.722E-02 & 0.9448 & 1.942E-02 & 4.911E-02 & 0.6255 & 0.6305 & 0.9844\tabularnewline
case2383wp & 5.620E+01 & 1.659E+06 & 7.346E-03 & 7.507E-03 & 0.7910 & 1.771E-02 & 4.907E-02 & 0.6491 & 0.3166 & 0.9594\tabularnewline
case2736sp & 5.620E+01 & 2.681E+06 & 7.258E-03 & 4.678E-03 & 0.8442 & 1.504E-02 & 2.376E-02 & 0.7015 & 0.4018 & 0.9639\tabularnewline
case2737sop & 5.620E+01 & 2.668E+06 & 7.756E-03 & 3.705E-03 & 0.8663 & 1.500E-02 & 1.698E-02 & 0.7272 & 0.4372 & 0.9637\tabularnewline
case2746wop & 5.620E+01 & 8.041E+06 & 7.734E-03 & 5.055E-03 & 0.8598 & 1.507E-02 & 2.326E-02 & 0.7188 & 0.4242 & 0.9664\tabularnewline
case2746wp & 5.620E+01 & 8.021E+06 & 7.445E-03 & 6.723E-03 & 0.8561 & 1.527E-02 & 2.897E-02 & 0.7015 & 0.4482 & 0.9621\tabularnewline
case2848rte & 7.540E+01 & 4.614E+06 & 9.682E-03 & 1.004E-02 & 0.9139 & 1.897E-02 & 2.665E-02 & 0.6717 & 0.5542 & 0.9779\tabularnewline
case2868rte & 7.540E+01 & 4.61E+06 & 1.018E-02 & 1.397E-02 & 0.9296 & 1.897E-02 & 3.433E-02 & 0.6704 & 0.5456 & 0.9835\tabularnewline
case2869pegase & 7.540E+01 & 1.913E+06 & 5.574E-03 & 2.160E-02 & 0.9007 & 7.707E-03 & 6.235E-02 & 0.8388 & 0.5258 & 0.9678\tabularnewline
case3012wp & 5.894E+01 & 2.904E+06 & 7.066E-03 & 7.376E-03 & 0.8674 & 1.411E-02 & 2.866E-02 & 0.7125 & 0.4799 & 0.9648\tabularnewline
case3120sp & 5.620E+01 & 2.872E+06 & 7.085E-03 & 5.949E-03 & 0.8285 & 1.464E-02 & 3.045E-02 & 0.6975 & 0.3781 & 0.9586\tabularnewline
case3375wp & 6.649E+01 & 3.874E+06 & 7.462E-03 & 1.072E-02 & 0.8762 & 1.556E-02 & 3.152E-02 & 0.6856 & 0.5602 & 0.9623\tabularnewline
case6468rte & 7.540E+01 & 3.925E+06 & 6.089E-03 & 6.217E-03 & 0.8865 & 1.495E-02 & 3.117E-02 & 0.6772 & 0.4425 & 0.9648\tabularnewline
case6470rte & 7.540E+01 & 3.967E+06 & 5.861E-03 & 6.354E-03 & 0.8825 & 1.455E-02 & 3.477E-02 & 0.6992 & 0.4566 & 0.9738\tabularnewline
case6495rte & 7.540E+01 & 3.943E+06 & 6.154E-03 & 7.062E-03 & 0.8837 & 1.531E-02 & 3.732E-02 & 0.6968 & 0.4670 & 0.9760\tabularnewline
case6515rte & 7.540E+01 & 3.945E+06 & 6.308E-03 & 7.374E-03 & 0.8848 & 1.450E-02 & 3.913E-02 & 0.7214 & 0.4657 & 0.9730\tabularnewline
case9241pegase & 1.218E+02 & 3.139E+06 & 3.733E-03 & 9.445E-03 & 0.8732 & 6.000E-03 & 4.902E-02 & 0.7661 & 0.4146 & 0.9716\tabularnewline
case13659pegase & 1.218E+02 & 3.151E+06 & 3.401E-03 & 4.530E-03 & 0.9089 & 1.566E-02 & 2.849E-02 & 0.6134 & 0.5315 & 0.9658\tabularnewline
\hline 
\end{tabular}
\end{table*}

Theorems~\ref{thm:main_si}, \ref{thm:wls}, and~\ref{thm:cert}
provide deterministic worst-case guarantees and are therefore expected
to be conservative. This section quantifies that conservatism empirically.
To do this, we fit an empirical noise level
\begin{equation}
\tilde{\eta}\coloneqq\frac{\tilde{K}_{s}\|\epsilon_{s}\|_{\infty,\mathrm{obs}}+\tilde{K}_{u}\|\epsilon_{u}\|_{\infty}}{\rho},\label{eq:tilde_eta_def}
\end{equation}
which has the same form as the theoretical noise level $\eta$ in
Theorem~\ref{thm:main_si}, but with constants $\tilde{K}_{s},\tilde{K}_{u}$
fitted from data. We then test two empirical predictions. First, the
spectral initializer should satisfy 
\[
\rmse(\theta_{\si},\theta_{\gnd})\approx\tilde{\eta}\qquad\text{for small }\tilde{\eta}.
\]
Second, after Gauss--Newton refinement initialized at $\theta_{\si}$,
the refined point $\theta$ should satisfy 
\[
\delta(\exp(i\theta))\approx0\qquad\text{for small }\tilde{\eta}.
\]
These are the empirical counterparts of Theorems~\ref{thm:main_si},
\ref{thm:wls}, and~\ref{thm:cert}.

Our experiments are performed on the MATPOWER test systems~\cite{zimmerman2010matpower}.
For each test system, we fix the network matrices $Y_{\bus},Y_{\From},Y_{\To},F_{\From},F_{\To}$
and the ground-truth voltage phasor $v_{\gnd}=u_{\gnd}\odot\exp(i\theta_{\gnd})$.
We use binary measurement weights $d\in\{0,1\}^{m}$, and vary the
observability margin $\rho$ by changing the observed measurement
mask $d$. Masks with $\rho=0$ are discarded, leaving $100$ observable
measurement masks. For each mask, we generate zero-mean i.i.d. Gaussian
perturbation directions for $\epsilon_{s}$ and $\epsilon_{u}$ with
variances $\sigma_{s}^{2}$ and $\sigma_{u}^{2}$, and then rescale
them if necessary to achieve the prescribed values of $\|\epsilon_{s}\|_{\infty,\mathrm{obs}}$
and $\|\epsilon_{u}\|_{\infty}$. Angular errors are computed modulo
the global phase ambiguity using (\ref{eq:rmsedef}). The experiments
are performed in MATLAB 2024b with random seed \texttt{rng(0)}, and
each plotted point represents one trial over a range of noise realizations.

\subsection{Accuracy of the spectral initializer}\label{subsec:exp1}

We fit (\ref{eq:tilde_eta_def}) in the low-error regime, retaining
samples with $\rmse(\theta_{\si,j},\theta_{\gnd})\le3^{\circ}.$ Define
$\xsf_{s,j}\coloneqq\|\epsilon_{s,j}\|_{\infty,\mathrm{obs}},$ $\xsf_{u,j}\coloneqq\|\epsilon_{u,j}\|_{\infty},$
$\ysf_{j}\coloneqq\rmse(\theta_{\si,j},\theta_{\gnd})$, and $\mathcal{I}\coloneqq\{j:\ysf_{j}\le3^{\circ}\}$.
We use MATLAB's \texttt{lsqnonlin} to compute $\tilde{K}_{s},\tilde{K}_{u}$
by the log-scale regression 
\[
\min_{K_{s},K_{u}\ge0}\sum_{j\in\mathcal{I}}\left(\log\ysf_{j}-\log\frac{K_{s}\xsf_{s,j}+K_{u}\xsf_{u,j}}{\rho_{j}}\right)^{2}.
\]
The solid lines in Fig.~\ref{fig:eta} show the resulting fit for
the PEGASE $1354$-bus test case. As expected, the empirical dependence
on $\|\epsilon_{s}\|_{\infty,\mathrm{obs}}$ and $\|\epsilon_{u}\|_{\infty}$
is approximately linear, but the leading constants are significantly smaller
than those prediced by theory. The empirically extracted $\tilde{K}_{s}=0.011$
and $\tilde{K}_{u}=0.057$ (with fit quality $R^{2}=0.93$) are orders
of magnitude smaller than their corresponding theoretical values $K_{s}=75.4$
and $K_{u}=1.52\cdot10^{6}$, so spectral initialization is substantially
more accurate in practice than worst-case theory predicts. Table~\ref{tab:KsKu}
confirms the same trend across the other test systems.

The remaining mismatch between theory and data is mainly in the dependence
on observability $\rho$. As a counterfactual test, we repeat the
same fit procedure for (\ref{eq:tilde_eta_def}), but allow the exponents
on $\rho$ to vary: 
\begin{equation}
\tilde{\eta}\coloneqq\tilde{K}_{s}\frac{\|\epsilon_{s}\|_{\infty,\mathrm{obs}}}{\rho^{\beta}}+\tilde{K}_{u}\frac{\|\epsilon_{u}\|_{\infty}}{\rho^{\xi}}.\label{eq:eq:tilde_eta_def2}
\end{equation}
As seen in Fig.~\ref{fig:eta}, the flexible model gives a much better
fit for the PEGASE $1354$-bus case, boosting the fit quality to $R^{2}=0.98$.
The fitted exponents $\beta=0.746$ and $\xi=0.514$ are well below
one. This suggests that spectral initialization is much more robust
to reductions in observability $\rho\to0^{+}$ than worst-case theory
predicts. Table~\ref{tab:KsKu} confirms these same pattern across
other test cases.

\subsection{Certification after local refinement}\label{subsec:exp2}

Theorem~\ref{thm:cert} and Theorem~\ref{thm:wls} gives a sufficient
condition under which Gauss--Newton initialized at $\theta_{\si}$
produces a point that can be certified as near-global and near-recovering.
In the empirical scale (\ref{eq:tilde_eta_def}), this threshold becomes
\begin{equation}
\tilde{\eta}\le\tilde{\eta}_{\cert}\coloneqq\frac{\min\left\{ \tilde{K}_{s}/K_{s},\tilde{K}_{u}/K_{u}\right\} }{4\sqrt{n}(\kappa_{\infty}+1)}.\label{eq:eta_cert}
\end{equation}
To test the conservatism of (\ref{eq:eta_cert}), we use the full
set of complex bus power measurements, take $\sigma_{u}\in\{0,0.0001,0.001,0.01\}$,
sweep $\sigma_{s}$ over $100$ logarithmically spaced values in $[10^{-5},1]$,
and run $50$ random trials for each $(\sigma_{s},\sigma_{u})$ pair.
For each trial, we compute the spectral estimate $\theta_{\si}$,
perform 20 iterations of Gauss--Newton refinement, and evaluate the
certificate. We count a trial as certified when $\delta(\exp(i\theta))\le10^{-3}.$

Fig.~\ref{fig:delta} plots the certified suboptimality $\delta(\exp(i\theta))$
against the noise level $\tilde{\eta}$ for the PEGASE $1354$-bus
case. With perfect magnitude measurements ($\sigma_{u}=0$) certification
succeeds up to comparatively high noise levels of $\tilde{\eta}_{\emp}=6.81\cdot10^{-2}$
radians. But when magnitude measurements are also noisy ($\sigma_{u}>0$),
certification begins to fail much earlier, at noise levels of $\tilde{\eta}_{\emp}=6.65\cdot10^{-5}$
radians. Both empirical thresholds remain far above the theoretical
threshold (\ref{eq:eta_cert}), which evaluates to $\tilde{\eta}_{\cert}=1.237\cdot10^{-12}$.
Table~\ref{tab:thresh} shows the same trends across our other test
cases.

\begin{figure}
\centering \includegraphics[width=1\columnwidth]{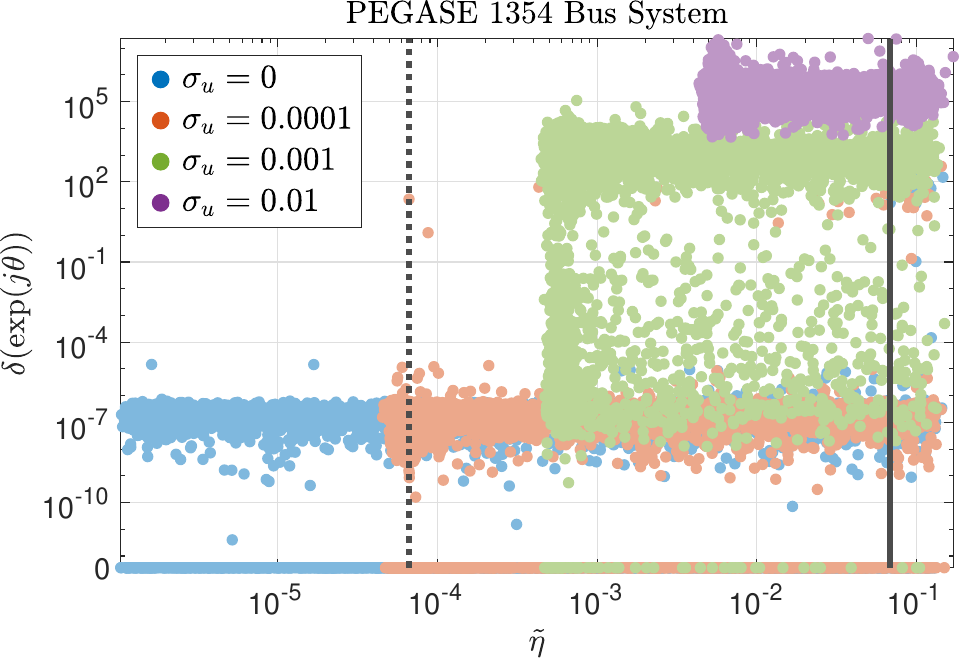}
\caption{Certified suboptimality $\delta(\exp(i\theta))$ after 20 steps of
Gauss--Newton refinement at the spectral estimate. The solid and
dashed vertical lines indicate the largest tested values of $\widetilde{\eta}$
for which $\delta(\exp(i\theta))\le10^{-3}$, with $\epsilon_{u}=0$
and over all samples, respectively.}
\label{fig:delta} 
\end{figure}

\begin{table}
\caption{Theory-implied and empirical certification thresholds up to four
significant digits. The empirical threshold $\tilde{\eta}_{\protect\emp}$
is the largest tested value of $\tilde{\eta}$ for which $\delta(\exp(i\theta))\le10^{-3}$.}\label{tab:thresh}

\centering %
\begin{tabular}{c|ccc}
\hline 
Case & $\tilde{\eta}_{\cert}$  & $\tilde{\eta}_{\emp}$$\ (\epsilon_{u}=0)$ & $\tilde{\eta}_{\emp}$$\ (\epsilon_{u}\neq0)$\tabularnewline
\hline 
case1354pegase & 1.237E-14 & 6.819E-02 & 6.650E-05\tabularnewline
case1888rte & 6.609E-16 & 2.936E-02 & 3.572E-05\tabularnewline
case1951rte & 5.721E-16 & 2.989E-02 & 3.644E-05\tabularnewline
case2383wp & 1.436E-15 & 6.157E-02 & 3.234E-05\tabularnewline
case2736sp & 3.412E-16 & 4.266E-02 & 2.092E-05\tabularnewline
case2737sop & 2.693E-16 & 5.541E-02 & 1.768E-05\tabularnewline
case2746wop & 3.857E-17 & 5.853E-02 & 2.169E-05\tabularnewline
case2746wp & 5.216E-17 & 5.954E-02 & 2.928E-05\tabularnewline
case2848rte & 2.061E-16 & 2.321E-02 & 2.315E-05\tabularnewline
case2868rte & 2.452E-16 & 3.349E-02 & 2.974E-05\tabularnewline
case2869pegase & 1.982E-15 & 6.140E-02 & 1.368E-04\tabularnewline
case3012wp & 3.132E-16 & 5.567E-02 & 3.024E-05\tabularnewline
case3120sp & 2.602E-16 & 6.550E-02 & 2.254E-05\tabularnewline
case3375wp & 2.720E-16 & 5.744E-02 & 3.537E-05\tabularnewline
case6468rte & 9.800E-17 & 3.433E-02 & 2.632E-05\tabularnewline
case6470rte & 9.217E-17 & 3.006E-02 & 2.745E-05\tabularnewline
case6495rte & 1.022E-16 & 3.728E-02 & 2.880E-05\tabularnewline
case6515rte & 1.075E-16 & 4.153E-02 & 2.966E-05\tabularnewline
case9241pegase & 2.272E-16 & 5.776E-02 & 2.854E-04\tabularnewline
case13659pegase & 9.022E-17 & 5.199E-02 & 2.178E-04\tabularnewline
\hline 
\end{tabular}
\end{table}

\section{Conclusions}

This paper gives a theoretical explanation for the empirical effectiveness
of spectral initialization and spectral certification observed in
prior work~\cite{guzel2024power}. When the normalized noise level
$\eta$ defined in Theorem~\ref{thm:main_si} is sufficiently small,
the spectral initializer is $O(\eta)$ accurate, and the certificate
succeeds after local refinement. A central and surprising finding
is that these global spectral methods, which do not require an initial
point, can be characterized in terms of the classical observability
margin $\rho$, even though $\rho$ is traditionally a local quantity
defined near the ground truth.

Experiments find that our theoretical results gives a valid but pessimistic
threshold for the low-noise regime $\eta\approx0$. In practice, spectral
methods continue to perform well far beyond these thresholds. The
gap between theory and practice is unavoidable, because theory must
account for deterministic worst-case perturbations aligned with the
least-observable modes, whereas the random average-case perturbations
used in practice are typically far less damaging. Still, the gap could
be made smaller by sharper bookkeeping or stronger analysis techniques.
While this paper proves that spectral methods work, precisely characterizing
their fundamental limits remains an open question.

Finally, reducing PSSE to an angle-estimation problem where spectral
methods apply relies on the restrictive Assumption~\ref{asm}. Extending
these ideas to settings that relax or remove this assumption is another
important direction for future work.

\appendix

\section{Proofs}

\subsection{Connection with observability (Lemma~\ref{lem:C0_observability})}\label{subsec:proof_observ}

Under Assumption~\ref{asm}, write the magnitude weights as $w_{u}=\beta\one$
with $\beta\to\infty$, and use the power weights $w_{p}=w_{q}=d$
to write $D=\Diag(d)$. The weighted Jacobian at the ground truth
is 
\[
H\coloneqq H(\theta_{\gnd},u_{\gnd})=\begin{bmatrix}\Re\{J_{\theta}\} & \Re\{J_{u}\}\\
\Im\{J_{\theta}\} & \Im\{J_{u}\}\\
0 & \sqrt{\beta}\,I
\end{bmatrix},
\]
where the power-measurement part is 
\[
J_{\theta}=D^{1/2}\frac{\partial}{\partial\theta}s(u\odot\exp(\i\theta)),\quad J_{u}=D^{1/2}\frac{\partial s(u\odot\exp(\i\theta))}{\partial u}.
\]
The Jacobian is singular, with $H\begin{bmatrix}\one_{n}\\
0
\end{bmatrix}=0,$ due to the absolute phase. Its next smallest singular value satisfies
\[
\sigma_{2n-1}(H)=\min_{\substack{(\dot{\theta},\dot{u})\in\R^{2n}\\
\one^{T}\dot{\theta}=0,\ \|(\dot{\theta},\dot{u})\|=1
}
}\sqrt{\|J_{\theta}\dot{\theta}+J_{u}\dot{u}\|^{2}+\beta\|\dot{u}\|^{2}}.
\]
Taking the limit $\beta\to\infty$, we have 
\[
\rho=\lim_{\beta\to\infty}\sigma_{2n-1}(H)=\min_{\substack{\dot{\theta}\in\R^{n}\\
\one^{T}\dot{\theta}=0,\ \|\dot{\theta}\|=1
}
}\|J_{\theta}\dot{\theta}\|=\sigma_{n-1}(J_{\theta}).
\]
It remains to identify $J_{\theta}$. For arbitrary $\dot{\theta}\in\R^{n}$,
write $x(t)=x_{\gnd}\odot\exp(\i t\dot{\theta}),$ and note that
\begin{align*}
\frac{\mathrm{d}}{\mathrm{d}t}s(u_{\gnd}\odot x(t))\bigg|_{t=0} & =\frac{\partial s(u_{\gnd}\odot x_{\gnd})}{\partial\theta}\frac{\mathrm{d}}{\mathrm{d}t}x(t)\bigg|_{t=0}\\
 & =J_{\theta}\left(x(0)\odot j\dot{\theta}\right)=J_{\theta}P\dot{\theta},
\end{align*}
where $P=\Diag(jx_{\gnd})$ is unitary. Also, the identity (\ref{eq:Q*Jx})
in the proof of Proposition~\ref{prop:Cmat} gives 
\[
D^{1/2}\conj\left[s(u_{\gnd}\odot x(t))-s_{\gnd}\right]=Q(x(t))^{*}C_{0}x(t),
\]
where $Q(x)=\Diag(Bx)$ is also unitary. Combining these two identities,
and recalling that $\dot{\theta}$ is real-valued, yields
\begin{align*}
\conj(J_{\theta}P)\dot{\theta} & =\frac{\mathrm{d}}{\mathrm{d}t}D^{1/2}\conj\left[s(u_{\gnd}\odot x(t))-s_{\gnd}\right]\bigg|_{t=0}\\
 & =\frac{\mathrm{d}}{\mathrm{d}t}Q(x(t))^{*}C_{0}x(t)\bigg|_{t=0}\\
 & =Q(x(0))^{*}C_{0}\frac{\mathrm{d}}{\mathrm{d}t}x(t)\bigg|_{t=0}+\frac{\mathrm{d}}{\mathrm{d}t}Q(x(t))^{*}\bigg|_{t=0}C_{0}x(0)\\
 & =Q(x(0))^{*}C_{0}(\i x(0)\odot\dot{\theta})+0\\
 & =\left[Q(x_{\gnd})^{*}C_{0}P\right]\dot{\theta}.
\end{align*}
Since $Q(x_{\gnd})$ and $P$ are both unitary, and singular values
are unchanged under complex conjugation, we have 
\[
\rho=\lim_{t\to\infty}\sigma_{2n-1}(H)=\sigma_{n-1}(J_{\theta})=\sigma_{n-1}(C_{0}),
\]
as claimed in Lemma~\ref{lem:C0_observability}.

\subsection{Spectral initialization (Lemmas~\ref{lem:si} and \ref{lem:wls})}\label{subsec:specinit}

Both results rely on the following lemma. 
\begin{lem}
\label{lem:dist_xz} Given $C_{0},C\in\C^{m\times n}$ and $x_{\gnd}\in\C_{1}^{n}$
such that $C_{0}x_{\gnd}=0$, let $x_{\wls}\in\arg\min_{x\in\C_{1}^{n}}\|Cx\|^{2},$
and $x_{\min}\in\arg\min_{x\in\C^{n},\|x\|^{2}=n}\|Cx\|^{2}.$ If
the difference $E\coloneqq C-C_{0}$ satisfies 
\[
\sigma_{n-1}(C_{0})>\opnorm E+\frac{1}{\sqrt{n}}\norm{Ex_{\gnd}},
\]
then the following holds for both $x=x_{\wls}$ and $x=x_{\min}$
\begin{equation}
\dist(x,x_{\gnd})\le2\sqrt{2}\left(\mu-\frac{\norm{Ex_{\gnd}}^{2}}{n\mu}\right)^{-1}\norm{Ex_{\gnd}},\label{eq:dist_xz}
\end{equation}
where $\mu\coloneqq\sigma_{n-1}(C_{0})-\opnorm E$. 
\end{lem}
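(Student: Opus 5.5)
The plan is to treat both candidates uniformly. Let $e\coloneqq\norm{Ex_{\gnd}}$, $\sigma\coloneqq\sigma_{n-1}(C_{0})$ and $\mu=\sigma-\opnorm E$. The only property of $x\in\{x_{\wls},x_{\min}\}$ the argument uses is
\[
\norm x^{2}=n,\qquad\norm{Cx}\le\norm{Cx_{\gnd}}=e .
\]
Both parts hold for both points. For $x_{\wls}$: $|x_{i}|=1$ gives $\norm x^{2}=n$, and $x_{\gnd}\in\C_{1}^{n}$ is feasible for (\ref{eq:P}). For $x_{\min}$: the norm condition is its constraint, and $\norm{x_{\gnd}}^{2}=n$ makes $x_{\gnd}$ feasible there too. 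In both cases $Cx_{\gnd}=C_{0}x_{\gnd}+Ex_{\gnd}=Ex_{\gnd}$. So it suffices to prove (\ref{eq:dist_xz}) for an arbitrary $x$ with these two properties.

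\textbf{Step 1 (decomposition).} Set $z\coloneqq x_{\gnd}/\sqrt n$, a unit vector with $C_{0}z=0$. Decompose $x=az+y$ with $y\perp z$, so that $|a|^{2}+t^{2}=n$ where $t\coloneqq\norm y$. Since $y\perp\ker C_{0}\supseteq\operatorname{span}(z)$, we have $\norm{C_{0}y}\ge\sigma t$.

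\textbf{Step 2 (lower bound on $\norm{Cx}$).} Expand
\[
Cx=aEz+C_{0}y+Ey .
\]
The triangle inequality, together with $|a|=\sqrt{n-t^{2}}$, gives
\[
e\ge\norm{Cx}\ge\mu t-\sqrt{1-t^{2}/n}\;e .
\]

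\textbf{Step 3 (solve for $t$).} This is the only delicate step. If $\mu t\le e$, then $t\le e/\mu$, which is already within the target bound. Otherwise, rearrange to $\mu t-e\le e\sqrt{1-t^{2}/n}$ and square both (nonnegative) sides:
\[
\mu^{2}t^{2}-2\mu et+e^{2}\le e^{2}-e^{2}t^{2}/n .
\]
Dividing by $t>0$ yields
\[
t\le\frac{2\mu e}{\mu^{2}+e^{2}/n}\le\frac{2\mu e}{\mu^{2}-e^{2}/n}=2\left(\mu-\frac{e^{2}}{n\mu}\right)^{-1}e .
\]
The hypothesis $\sigma>\opnorm E+e/\sqrt n$ means exactly $\mu>e/\sqrt n>0$. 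This makes the last denominator positive. It also shows $e/\mu\le2\mu e/(\mu^{2}-e^{2}/n)$, so the first case is covered by the same bound.

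\textbf{Step 4 (convert to $\dist$).} Choose the global phase $\theta_{0}$ equal to $\arg a$. Then
\[
\norm{x-x_{\gnd}e^{\i\theta_{0}}}^{2}=(\sqrt n-|a|)^{2}+t^{2}.
\]
Since $\sqrt n-|a|=t^{2}/(\sqrt n+|a|)\le t^{2}/\sqrt n\le t$, using $t\le\sqrt n$, we get $\dist(x,x_{\gnd})\le\sqrt2\,t$. Combining with Step 3 gives (\ref{eq:dist_xz}).

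I expect Step 3, handling the $\sqrt{1-t^{2}/n}$ term so as to produce exactly the stated denominator $\mu-e^{2}/(n\mu)$, to be the only real obstacle. The squaring argument gives a slightly stronger bound, which then implies the stated one.
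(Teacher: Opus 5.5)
Your proof is correct and is essentially the paper's argument. It uses the same orthogonal decomposition of $x$ against $x_{\gnd}$, the same bound $\norm{Cy}\ge\mu\norm{y}$ on the orthogonal part, the same comparison with the feasible point $x_{\gnd}$, and the same conversion $\dist(x,x_{\gnd})\le\sqrt{2}\norm{y}$; your inference $y\perp z\Rightarrow y\perp\ker C_0$ needs $\ker C_0=\operatorname{span}(z)$, which the hypothesis $\sigma_{n-1}(C_0)>0$ guarantees. The only difference is how the scalar inequality is solved: the paper expands $\norm{Cx}^2$, loosens $(1-\alpha^2)\norm{Ex_{\gnd}}^2\le\norm{Cr}^2\norm{Ex_{\gnd}}^2/(n\mu^2)$ and solves for $\norm{Cr}$, whereas you solve $\mu t\le e\bigl(1+\sqrt{1-t^2/n}\bigr)$ directly in $t$, which gives the slightly sharper $t\le 2\mu e/(\mu^2+e^2/n)$.
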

\begin{IEEEproof}
Let $\mu\coloneqq\sigma_{n-1}(C_{0})-\|E\|_{\op}.$ Then, for every
$r\perp x_{\gnd}$, 
\[
\|Cr\|\ge\|C_{0}r\|-\|Er\|\ge\bigl(\sigma_{n-1}(C_{0})-\|E\|_{\op}\bigr)\|r\|=\mu\|r\|.
\]
Changing the global phase of $x$ if necessary, assume that $x_{\gnd}^{*}x=|x_{\gnd}^{*}x|$.
Using Gram--Schmidt orthogonalization, define $r\coloneqq x-\alpha x_{\gnd}$
where $\alpha\coloneqq x_{\gnd}^{*}x/\|x_{\gnd}\|^{2}$. Then,
\[
x=\alpha x_{\gnd}+r,\qquad x_{\gnd}^{*}r=0,\qquad0\le\alpha\le1.
\]
Since $\|x\|=\|x_{\gnd}\|=\sqrt{n}$, we have $\|r\|^{2}=n(1-\alpha^{2}),$
and
\[
\dist(x,x_{\gnd})=\|x-x_{\gnd}\|=\sqrt{2n(1-\alpha)}\le\sqrt{2}\|r\|.
\]
By optimality of $x$ and since $C_{0}x_{\gnd}=0$, 
\begin{align*}
0 & \le\|Cx_{\gnd}\|^{2}-\|Cx\|^{2}\\
 & =(1-\alpha^{2})\|Ex_{\gnd}\|^{2}-\|Cr\|^{2}-2\alpha\Re\inner{Ex_{\gnd}}{Cr}\\
 & \le\frac{\|r\|^{2}}{n}\|Ex_{\gnd}\|^{2}-\|Cr\|^{2}+2\|Ex_{\gnd}\|\|Cr\|\\
 & \le\left(\frac{\|Ex_{\gnd}\|^{2}}{n\mu^{2}}-1\right)\|Cr\|^{2}+2\|Ex_{\gnd}\|\|Cr\|
\end{align*}
where we used $\|Cr\|\ge\mu\|r\|$ in the final line. Because $\mu>\|Ex_{\gnd}\|/\sqrt{n}$
by hypothesis, rearranging gives 
\[
\|Cr\|\le2\left(1-\frac{\|Ex_{\gnd}\|^{2}}{n\mu^{2}}\right)^{-1}\|Ex_{\gnd}\|.
\]
Substituting $\|Cr\|\ge\mu\|r\|$ and $\dist(x,x_{\gnd})\le\sqrt{2}\|r\|$
proves the claim. 
\end{IEEEproof}
We will use this to prove Lemmas~\ref{lem:si} and also the following.
\begin{lem}
\label{lem:wls}Under the same conditions as Lemma~\ref{lem:si},
$x_{\wls}\coloneqq\arg\min_{x\in\C_{1}^{n}}\|Cx\|^{2}$ satisfies
\begin{align}
\dist(x_{\si},x_{\wls}) & \le12\sqrt{2}\,\frac{\|(C-C_{0})x_{\gnd}\|}{\sigma_{n-1}(C_{0})},\label{eq:cor_si_mle}\\
\dist(x_{\wls},x_{\gnd}) & \le4\sqrt{2}\,\frac{\|(C-C_{0})x_{\gnd}\|}{\sigma_{n-1}(C_{0})}.\label{eq:cor_mle}
\end{align}
\end{lem}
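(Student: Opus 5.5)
Both bounds will follow from Lemma~\ref{lem:dist_xz} applied with $x=x_{\wls}$, combined with Lemma~\ref{lem:si} and the triangle inequality for $\dist$. Write $E\coloneqq C-C_{0}$, $\sigma\coloneqq\sigma_{n-1}(C_{0})$, and $\epsilon\coloneqq\|Ex_{\gnd}\|$.

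\emph{Step 1: check the hypothesis of Lemma~\ref{lem:dist_xz}.} Since $x_{\gnd}\in\C_{1}^{n}$, we have $\|x_{\gnd}\|=\sqrt{n}$, so $\epsilon/\sqrt{n}\le\opnorm{E}$. The hypothesis \eqref{eq:main_mle_assump} of Lemma~\ref{lem:si} gives $\opnorm{E}\le\sigma/3$. Therefore
\[
\opnorm{E}+\epsilon/\sqrt{n}\le 2\sigma/3<\sigma,
\]
where strict inequality uses $\sigma>0$. Hence Lemma~\ref{lem:dist_xz} applies.

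\emph{Step 2: bound $\dist(x_{\wls},x_{\gnd})$.} Set $\mu=\sigma-\opnorm{E}\ge 2\sigma/3$. Since $\epsilon^{2}/n\le\opnorm{E}^{2}\le\sigma^{2}/9$, we get
\[
\frac{\epsilon^{2}}{n\mu}\le\frac{\sigma^{2}/9}{2\sigma/3}=\frac{\sigma}{6}.
\]
So the bracket in \eqref{eq:dist_xz} satisfies
\[
\mu-\frac{\epsilon^{2}}{n\mu}\ge\frac{2\sigma}{3}-\frac{\sigma}{6}=\frac{\sigma}{2}.
\]
Substituting into \eqref{eq:dist_xz} yields $\dist(x_{\wls},x_{\gnd})\le 2\sqrt{2}\cdot 2\epsilon/\sigma=4\sqrt{2}\,\epsilon/\sigma$, which is \eqref{eq:cor_mle}.

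\emph{Step 3: bound $\dist(x_{\si},x_{\wls})$.} First confirm that $\dist$ is a pseudometric. It is invariant under multiplying either argument by a global unit phase, since $\min_{\theta_{0}}\|x-y e^{\i\theta_{0}}\|$ is symmetric and satisfies the triangle inequality: pick the optimal phases for each leg and compose them. Then
\[
\dist(x_{\si},x_{\wls})\le\dist(x_{\si},x_{\gnd})+\dist(x_{\gnd},x_{\wls})\le 8\sqrt{2}\,\epsilon/\sigma+4\sqrt{2}\,\epsilon/\sigma=12\sqrt{2}\,\epsilon/\sigma,
\]
using Lemma~\ref{lem:si} for the first term and Step 2 for the second. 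This is \eqref{eq:cor_si_mle}.

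The only step that needs care is the constant bookkeeping in Step 2: the factor $1/3$ in \eqref{eq:main_mle_assump} must make the denominator of \eqref{eq:dist_xz} at least $\sigma/2$. The inequality $\epsilon\le\sqrt{n}\opnorm{E}$ handles this, and the rest is routine.
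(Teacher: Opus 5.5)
Your proposal is correct and follows the paper's proof essentially line for line. You apply Lemma~\ref{lem:dist_xz} to $x_{\wls}$, and use $\|Ex_{\gnd}\|\le\sqrt{n}\,\opnorm{E}$ together with $\mu\ge\frac{2}{3}\sigma_{n-1}(C_{0})$ to bound the prefactor by $2/\sigma_{n-1}(C_{0})$; you lower-bound $\mu-\|Ex_{\gnd}\|^{2}/(n\mu)$ directly, whereas the paper bounds $t^{2}/n\le 1/4$, but this is the same computation. You then combine the $8\sqrt{2}$ bound of Lemma~\ref{lem:si} with the triangle inequality for $\dist$, which is legitimate because in the paper's joint proof the $x_{\si}$ bound does not depend on the $x_{\wls}$ bound.
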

The proof of Theorem~\ref{thm:wls} follows by repeating the proof
of Theorem~\ref{thm:main_si} using Lemma~\ref{lem:wls}. 
\begin{IEEEproof}[Proof of Lemmas~\ref{lem:si} and \ref{lem:wls}]
Let $E\coloneqq C-C_{0}$, $\mu\coloneqq\sigma_{n-1}(C_{0})-\|E\|_{\op}$,
and $t\coloneqq\|Ex_{\gnd}\|/\mu$. Since $\|E\|_{\op}\le\frac{1}{3}\sigma_{n-1}(C_{0})$,
we have $\mu\ge\frac{2}{3}\sigma_{n-1}(C_{0})$. Moreover, since $\|x_{\gnd}\|=\sqrt{n}$,
\[
\frac{\|Ex_{\gnd}\|}{\sqrt{n}\,\mu}\le\frac{\|E\|_{\op}}{\mu}\le\frac{1}{2}.
\]
So $\mu>\|Ex_{\gnd}\|/\sqrt{n}$ holds, which is exactly the hypothesis
of Lemma~\ref{lem:dist_xz}. The same estimate also gives $t^{2}/n\le1/4$;
applying Lemma~\ref{lem:dist_xz} gives, for both $x=x_{\wls}$ and
$x=x_{\min}$, 
\begin{align*}
\dist(x,x_{\gnd})\le2\sqrt{2}\,\frac{t}{1-t^{2}/n} & \le\frac{8\sqrt{2}}{3}\,\frac{\|Ex_{\gnd}\|}{\mu}\\
 & \le4\sqrt{2}\,\frac{\|Ex_{\gnd}\|}{\sigma_{n-1}(C_{0})}.
\end{align*}
This proves (\ref{eq:cor_mle}) for $x_{\wls}$. It remains to pass
from $x_{\min}$ to $x_{\si}=\proj(x_{\min})$. Choose $\alpha\in\C_{1}$
such that 
\[
\dist(x_{\min},x_{\gnd})=\|\alpha x_{\min}-x_{\gnd}\|.
\]
Since entrywise projection commutes with global phase rotations, $\proj(\alpha x_{\min})=\alpha x_{\si}$.
Using the fact that projection onto $\C_{1}^{n}$ is $2$-Lipschitz
\cite[Lem.~20]{boumal2016nonconvex} yields
\begin{align*}
\dist(x_{\si},x_{\gnd}) & \le\|\alpha x_{\si}-x_{\gnd}\|\\
 & =\|\proj(\alpha x_{\min})-x_{\gnd}\|\le2\|\alpha x_{\min}-x_{\gnd}\|.
\end{align*}
Combining this with the bound for $x_{\min}$ proves (\ref{eq:main_mle}).
Finally, 
\[
\dist(x_{\si},x_{\wls})\le\dist(x_{\si},x_{\gnd})+\dist(x_{\wls},x_{\gnd}),
\]
which gives (\ref{eq:cor_si_mle}). 
\end{IEEEproof}

\subsection{Spectral certification (Lemma~\ref{lem:main_sdp})}

Recall that the certified bound $\delta(\cdot)$ and the slack matrix
$S(\cdot)$ were defined in (\ref{eq:delta_def}) and (\ref{eq:S_def}).
We first prove two lemmas. 
\begin{lem}
\label{lem:dual_gap}Given $C\in\C^{m\times n}$ and $x,z\in\C_{1}^{n}$,
we have
\[
x^{*}S(z)x=\|Cx\|^{2}-\|Cz\|^{2}.
\]
\end{lem}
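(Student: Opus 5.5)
We will prove this identity by direct expansion, using only that both $x$ and $z$ lie on the torus $\C_{1}^{n}$. Write $M\coloneqq C^{*}C$, which is Hermitian positive semidefinite. By definition (\ref{eq:S_def}),
\[
x^{*}S(z)x=x^{*}Mx-x^{*}\Re\!\left[\ddiag(Mzz^{*})\right]x,
\]
and the first term is exactly $\|Cx\|^{2}$. It therefore suffices to show that the second term equals $\|Cz\|^{2}$.

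The matrix $\Re[\ddiag(Mzz^{*})]$ is a real diagonal matrix. Its $i$-th diagonal entry is $\Re[(Mz)_{i}\overline{z_{i}}]$. For any diagonal matrix $\Lambda$ we have $x^{*}\Lambda x=\sum_{i}\Lambda_{ii}|x_{i}|^{2}$. Since $|x_{i}|=1$ for every $i$, the second term reduces to
\[
\sum_{i=1}^{n}\Re\!\left[(Mz)_{i}\overline{z_{i}}\right]=\Re\!\left[z^{*}Mz\right]=z^{*}Mz=\|Cz\|^{2}.
\]
Here we used that $z^{*}Mz$ is real because $M$ is Hermitian. Subtracting the two terms gives the claim.

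The only step that needs care is the reduction $x^{*}\Lambda x=\tr\Lambda$ for diagonal $\Lambda$ and $x\in\C_{1}^{n}$. This step is where the unit-modulus constraint on $x$ enters: it makes the quadratic form of the diagonal correction independent of $x$. We must also check that the real-part operator is applied entrywise to the diagonal, so that $\tr\Re[\ddiag(Mzz^{*})]=\Re\tr(Mzz^{*})=\Re(z^{*}Mz)$.

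As a consistency check, taking $x=z$ gives $z^{*}S(z)z=0$. Taking $x=x_{\wls}$ then gives $f(z)-f(x_{\wls})=-x_{\wls}^{*}S(z)x_{\wls}\le-n\lambda_{\min}(S(z))$, since $\|x_{\wls}\|^{2}=n$. This recovers Proposition~\ref{prop:cert}, and it is the form in which the lemma will be used in proving Lemma~\ref{lem:main_sdp}.
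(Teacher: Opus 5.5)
Your proof is correct and follows the same route as the paper: you expand $x^{*}S(z)x$ and use $|x_{i}|=1$ to collapse the diagonal quadratic form to $\sum_{i}\Re[(C^{*}Cz)_{i}\overline{z_{i}}]=\Re(z^{*}C^{*}Cz)=\|Cz\|^{2}$. Your computation also shows that only the unit-modulus property of $x$ is needed, so the identity in fact holds for every $z\in\C^{n}$.
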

\begin{IEEEproof}
For any $x\in\C_{1}^{n}$, 
\begin{align*}
x^{*}S(z)x & =x^{*}C^{*}Cx-x^{*}\Re[\ddiag(C^{*}Czz^{*})]x\\
 & =\norm{Cx}^{2}-\sum_{i=1}^{n}\Re\!\left((C^{*}Cz)_{i}\conj(z_{i})\right),\\
 & =\norm{Cx}^{2}-\Re(z^{*}C^{*}Cz).
\end{align*}
\end{IEEEproof}
\begin{lem}
\label{lem:strict_comp}Given $C_{0},C\in\C^{m\times n}$ and $x_{\gnd}\in\C_{1}^{n}$
such that $C_{0}x_{\gnd}=0$, let 
\[
x_{\wls}\in\arg\min_{x\in\C_{1}}\|Cx\|^{2}.
\]
If the difference $E\coloneqq C-C_{0}$ satisfies 
\[
\opnorm E\le\half\rho,\qquad\norm{Ex_{\gnd}}\le\frac{1}{4(\kappa_{\infty}+1)}\rho,
\]
where 
\[
\kappa_{\infty}:=\frac{\max_{i}\norm{C_{0}e_{i}}}{\sigma_{n-1}(C_{0})},\qquad\rho\coloneqq\sigma_{n-1}(C_{0}),
\]
then the dual matrix satisfies 
\[
\lambda_{n-1}(S(x_{\wls}))\ge\frac{\rho^{2}}{8(\kappa_{\infty}+1)}.
\]
\end{lem}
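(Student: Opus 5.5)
The plan is to treat $S(x_{\wls})$ as a perturbation of $C^{*}C$ by a real diagonal matrix and apply Weyl's inequality. Here $\lambda_{n-1}$ denotes the second-smallest eigenvalue; the smallest is $0$, because first-order stationarity of $x_{\wls}$ on $\C_{1}^{n}$ gives $S(x_{\wls})x_{\wls}=0$. Write
\[
S(x_{\wls})=C^{*}C-\Lambda,\qquad\Lambda\coloneqq\Re\!\left[\ddiag(C^{*}Cx_{\wls}x_{\wls}^{*})\right],
\]
so that $\Lambda$ is real diagonal with entries $\Lambda_{ii}=\Re\bigl(\conj((x_{\wls})_{i})\,e_{i}^{*}C^{*}Cx_{\wls}\bigr)$. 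Weyl's inequality then gives
\[
\lambda_{n-1}(S(x_{\wls}))\ge\lambda_{n-1}(C^{*}C)-\max_{i}|\Lambda_{ii}|,
\]
and the work reduces to bounding the two terms on the right separately.

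\textbf{Lower bound on $\lambda_{n-1}(C^{*}C)$.} We have $\lambda_{n-1}(C^{*}C)=\sigma_{n-1}(C)^{2}$. By Weyl's inequality for singular values and the hypothesis $\opnorm E\le\half\rho$,
\[
\sigma_{n-1}(C)\ge\sigma_{n-1}(C_{0})-\opnorm E\ge\rho/2,
\]
so $\lambda_{n-1}(C^{*}C)\ge\rho^{2}/4$.

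\textbf{Upper bound on the diagonal.} By Cauchy--Schwarz and $|(x_{\wls})_{i}|=1$, each entry satisfies $|\Lambda_{ii}|\le\norm{Ce_{i}}\,\norm{Cx_{\wls}}$. Global optimality of $x_{\wls}$ together with $C_{0}x_{\gnd}=0$ gives
\[
\norm{Cx_{\wls}}\le\norm{Cx_{\gnd}}=\norm{Ex_{\gnd}}\le\frac{\rho}{4(\kappa_{\infty}+1)}.
\]
By the definition of $\kappa_{\infty}$,
\[
\norm{Ce_{i}}\le\norm{C_{0}e_{i}}+\opnorm E\le\kappa_{\infty}\rho+\rho/2.
\]
Multiplying, $\max_{i}|\Lambda_{ii}|\le\rho^{2}(\kappa_{\infty}+\tfrac12)/(4(\kappa_{\infty}+1))$.

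\textbf{Combining.} Putting the two bounds together,
\[
\lambda_{n-1}(S(x_{\wls}))\ge\frac{\rho^{2}}{4}\Bigl(1-\frac{\kappa_{\infty}+\frac12}{\kappa_{\infty}+1}\Bigr)=\frac{\rho^{2}}{8(\kappa_{\infty}+1)},
\]
which is the claimed bound.

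The argument is short, and I do not expect a serious obstacle. The one point needing care is controlling the diagonal term $\Lambda$. Here the key is to bound $\norm{Cx_{\wls}}$ through global optimality against $x_{\gnd}$, rather than through any distance estimate such as Lemma~\ref{lem:dist_xz}. The column norms $\norm{Ce_{i}}$ are what introduce the factor $\kappa_{\infty}$, and the constant $4(\kappa_{\infty}+1)$ in the hypothesis on $\norm{Ex_{\gnd}}$ is exactly what makes the final arithmetic close.
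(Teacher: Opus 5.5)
Your proposal is correct and follows the paper's proof essentially step for step. Both arguments apply Weyl's inequality to $C^{*}C$ minus the real diagonal term, use $\lambda_{n-1}(C^{*}C)\ge(\rho-\opnorm{E})^{2}\ge\rho^{2}/4$, bound the diagonal by $\max_{i}\norm{Ce_{i}}\,\norm{Cx_{\wls}}$ (the paper writes this as $\norm{C^{*}}_{2\to\infty}\norm{Cx_{\wls}}$), invoke global optimality against $x_{\gnd}$ to get $\norm{Cx_{\wls}}\le\norm{Ex_{\gnd}}$, and finish with the same arithmetic.
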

\begin{IEEEproof}
Write $x=x_{\wls}$ and $S=S(x_{\wls})$. Weyl's inequality gives
\[
\lambda_{n-1}(S)\ge\lambda_{n-1}(C^{*}C)-\norm{\Re[\ddiag(C^{*}Cxx^{*})]}_{\op}.
\]
The diagonal term satisfies 
\begin{align}
\norm{\Re[\ddiag(C^{*}Cxx^{*})]}_{\op} & \le\norm{\ddiag(C^{*}Cxx^{*})}_{\op}\nonumber \\
 & =\norm{C^{*}Cx}_{\infty}\nonumber \\
 & \le\norm{C^{*}}_{2\to\infty}\norm{Cx}.\label{eq:diag_bound_wls}
\end{align}
Where $\norm{\cdot}_{2\to\infty}$ denotes the induced norm from $\ell_{2}$
to $\ell_{\infty}$. By optimality of $x$ and $C_{0}x_{\gnd}=0$,
\[
\norm{Cx}\le\norm{Cx_{\gnd}}=\norm{Ex_{\gnd}}.
\]
Plugging this into (\ref{eq:diag_bound_wls}) yields 
\begin{align}
\lambda_{n-1}(S) & \ge\lambda_{n-1}(C^{*}C)-\norm{C^{*}}_{2\to\infty}\norm{Ex_{\gnd}}.\label{eq:SDP_part2_1}
\end{align}
Substituting $\opnorm E\le\half\rho$ yields, 
\[
\lambda_{n-1}(C^{*}C)\ge\left(\sigma_{n-1}(C_{0})-\opnorm E\right)^{2}\ge\frac{\rho^{2}}{4}.
\]
Moreover, 
\begin{align*}
\norm{C^{*}}_{2\to\infty} & =\max_{i}\norm{(C_{0}+E)e_{i}}\\
 & \le\kappa_{\infty}\rho+\opnorm E\le\left(\kappa_{\infty}+\half\right)\rho.
\end{align*}
Substituting the above and $\norm{Ex_{\gnd}}\le\frac{\rho}{4(\kappa_{\infty}+1)}$
into (\ref{eq:SDP_part2_1}) gives 
\begin{align*}
\lambda_{n-1}(S) & \ge\frac{\rho^{2}}{4}-\left(\kappa_{\infty}+\half\right)\rho\norm{Ex_{\gnd}}\\
 & \ge\frac{\rho^{2}}{4}\left[1-\frac{\kappa_{\infty}+\half}{\kappa_{\infty}+1}\right]=\frac{\rho^{2}}{8(\kappa_{\infty}+1)}.
\end{align*}
\end{IEEEproof}
\begin{IEEEproof}[Proof of Lemma~\ref{lem:main_sdp}]
Write $S_{\wls}\coloneqq S(x_{\wls})$. By Lemma~\ref{lem:dual_gap}
and the optimality of $x_{\wls}$, 
\begin{equation}
x^{*}S_{\wls}x=\|Cx\|^{2}-\|Cx_{\wls}\|^{2}\ge0\qquad\text{for all }x\in\C_{1}^{n}.\label{eq:dual_gap}
\end{equation}
Equality holds at $x=x_{\wls}$. Together with Lemma~\ref{lem:strict_comp},
which gives $\lambda_{n-1}(S_{\wls})>0$, this implies 
\begin{equation}
S_{\wls}\succeq0,\qquad\ker(S_{\wls})=\mathrm{span}\{x_{\wls}\}.\label{eq:strict_comp}
\end{equation}
Now suppose $\delta(x)=0$. Proposition~\ref{prop:cert} gives $\|Cx\|^{2}=\|Cx_{\wls}\|^{2}$,
so (\ref{eq:dual_gap}) yields $x^{*}S_{\wls}x=0$. By (\ref{eq:strict_comp}),
this is equivalent to $x=x_{\wls}\exp(\i\theta_{0})$, and hence $\dist(x,x_{\wls})=0$.
Conversely, if $\dist(x,x_{\wls})=0$, then $x=x_{\wls}\exp(\i\theta_{0})$
and $xx^{*}=x_{\wls}x_{\wls}^{*}$. Hence $S(x)=S_{\wls}$, and (\ref{eq:strict_comp})
gives $\delta(x)=-n\lambda_{\min}(S(x))=-n\lambda_{\min}(S_{\wls})=0.$
\end{IEEEproof}

\section*{Acknowledgments}

IG and RZ were supported by NSF CAREER Award ECCS-2047462 and ONR
Award N00014-24-1-2671. AM was supported by the Hi! PARIS and ANR/France
2030 program (ANR-23-IACL-0005) and the Swiss SERI (contract MB22.00027).

\bibliographystyle{ieeetr}
\bibliography{references}

\end{document}